\newcommand{\thetitle}{
  Schr\"odinger Operators on Regular Metric Trees with
  Long Range Potentials: Weak Coupling Behavior
}
\newcommand{\theabstract}{
  Consider a regular $d$-dimensional metric tree $\Gamma$ with root $o$.
  Define the Schr\"odinger
  operator $-\Delta - V$, where $V$ is a non-negative,
  symmetric potential, on $\Gamma$, with Neumann boundary conditions
  at $o$. Provided that $V$ decays like $x^{-\gamma}$ at infinity,
  where $1 < \gamma \leq d \leq 2, \gamma \neq 2$, we will determine
  the weak coupling behavior of the bottom of the spectrum of $-\Delta - V$.
  In other words, we will describe
  the asymptotical behavior of
  $\inf \sigma(-\Delta - \alpha V)$ as $\alpha \to 0+$.
}
\newcommand{\thekeywords}{
  Schr\"odinger operators, metric trees, Fourier-Bessel
  transformation, weak coupling.  
}
\title{\thetitle}
\author{
  Tomas Ekholm\\
  {\small Department of Mathematics}\\
  {\small Lund University}\\
  {\small S-221 00 Lund, Sweden}\\
  {\small \texttt{tomas.ekholm@math.lu.se}}\\
  \and
  Andreas Enblom\\
  {\small Department of Mathematics}\\
  {\small Royal Institute of Technology}\\
  {\small S-100 44 Stockholm, Sweden}\\
  {\small \texttt{enblom@math.kth.se}}\\
  \and
  Hynek Kova\v{r}{\'\i}k\\
  {\small Dipartimento di Matematica}\\
  {\small Universit\`a degli Studi di Modena e Reggio Emilia}\\
  {\small I-411 00 Modena, Italy}\\
  {\small \texttt{hynek.kovarik@unimore.it}}\\
}
\theoremstyle{plain}
\newtheorem{thm}{Theorem}[section]
\newtheorem{lem}[thm]{Lemma}
\newtheorem{prop}[thm]{Proposition}
\theoremstyle{definition}
\newtheorem{defn}[thm]{Definition}
\newtheorem{rem}[thm]{Remark}
\newcommand{\st}{\,;\,}
\newcommand{\coloneq}{\mathrel{\mathop:}=}
\newcommand{\V}{\ensuremath{\mathcal{V}}}
\newcommand{\E}{\ensuremath{\mathcal{E}}}
\DeclareMathOperator{\impart}{Im}
\DeclareMathOperator{\supp}{supp}
\DeclareMathOperator{\tr}{Tr}
\newcommand{\utrans}{\ensuremath{\mathscr{U}}}
\newcommand{\vtrans}{\ensuremath{\mathscr{V}}}
\newcommand{\wtrans}{\ensuremath{\mathscr{W}}}
\newcommand{\reals}{\ensuremath{\mathbb{R}}}
\newcommand{\complex}{\ensuremath{\mathbb{C}}}
\newcommand{\hilbert}{\ensuremath{\mathscr{H}}}
\newcommand{\lspace}[2]{\ensuremath{L^{#1}\left(#2\right)}}
\newcommand{\sobspace}[2]{\ensuremath{H^{#1}\left(#2\right)}}
\newcommand{\cspace}[2]{\ensuremath{C^{#1}\left(#2\right)}}
\newcommand{\ltwo}{\lspace{2}{0, \infty}}
\newcommand{\cinf}{\cspace{\infty}{0, \infty}}
\newcommand{\spec}[1]{\ensuremath{\sigma\left(#1\right)}}
\newcommand{\dom}[1]{\ensuremath{D\left(#1\right)}}
\newcommand{\qdom}[1]{\ensuremath{D\left[#1\right]}}
\begin{document}

%%% ==============================
%%% Use with document class elsart
%%% ==============================

%\begin{frontmatter}
%\title{
%  \thetitle
%}
%
%\author{Tomas Ekholm}
%\address{
%  Department of Mathematics, 
%  Lund University, 
%  S-221 00 Lund, 
%  Sweden
%}
%\ead{tomas.ekholm@math.lu.se}
%
%\author{Andreas Enblom\corauthref{cor}}
%\address{
%  Department of Mathematics,
%  Royal Institute of Technology,
%  S-100 44 Stockholm, 
%  Sweden
%}
%\ead{enblom@math.kth.se}
%
%\author{Hynek Kova\v{r}{\'\i}k}
%\address{
%  Dipartimento di Matematica,
%  Universit\`a degli Studi di Modena e Reggio Emilia,
%  I-411 00 Modena, 
%  Italy
%}
%\ead{hynek.kovarik@unimore.it}
%
%\corauth[cor]{Corresponding author.}
%
%\begin{abstract}
%  \theabstract
%\end{abstract}
%
%\begin{keyword}
%  \thekeywords
%\end{keyword}
%
%\end{frontmatter}

%%% ==============================

%%% ===============================
%%% Use with document class article
%%% ===============================

\setlength{\parindent}{0pt}
\setlength{\parskip}{1ex plus 0.5ex minus 0.2ex}

\maketitle

\begin{abstract}
  \theabstract

  \vspace{1em}
  \noindent
  \emph{Keywords:} 
  \thekeywords
\end{abstract}

%%% ===============================

%%%%%%%%%%%%%%%%%%%%%%%%%%%%%%%%%%%%%%%%%%%%%%%%%%%%%%%%%%%%%%%%%%%%%%%%%%%%%%%
\section{Introduction}
%%%%%%%%%%%%%%%%%%%%%%%%%%%%%%%%%%%%%%%%%%%%%%%%%%%%%%%%%%%%%%%%%%%%%%%%%%%%%%%
It is a well known fact that the weak coupling asymptotics for a
Schr\"odinger operator $-\Delta -\alpha V$ in $\reals^n$ depends
on the dimension $n$ of the underlying space,
\cite{simon,bgs}. In case $n=1$ it has been shown in
\cite{simon,bgs} that if an integrable potential $V$ decays at
infinity faster than $x^{-2}$, then for $\alpha$ small enough the operator
$-\Delta -\alpha V$ has a unique eigenvalue $E_1(\alpha)$ which
satisfies the asymptotic equation
\begin{equation} \label{simon}
E_1(\alpha) \, \sim \, \left( \alpha \int_\reals V\right)^2,\quad \alpha\to 0 .
\end{equation}
Note that as long as $V$ satisfies the above criteria then the behavior of
$E_1(\alpha)$ is uniform in order of $\alpha$, i.e.~proportional to $\alpha^2$,
and the potential $V$ enters only in the coefficient.

If $V$ decays more slowly than $x^{-2}$ at infinity, the picture is
different,
since the operator $-\Delta -\alpha V$ now has
infinitely many eigenvalues for any $\alpha>0$.
However, Klaus has shown in \cite{klaus} that the ground state $E_1(\alpha)$
has different asymptotics than the rest of the discrete spectrum
but is still proportional to
$\alpha^2$, provided $V$ decays faster than $x^{-1}$. Finally,
when $V(x) \sim x^{-\gamma}$ with
$\gamma \leq 1$, then the quadratic dependence on
$\alpha$ fails to hold and the
corresponding power of $\alpha$ in the asymptotics of $E_1(\alpha)$ is fully
determined by the parameter $\gamma$, see \cite{klaus} for details.

In the present paper the goal is to study these questions in the
case of Schr\"odin\-ger operators defined on metric trees. Such
trees represent a particular case of the so-called quantum graphs,
that provide mathematical models for nano-technological devices
consisting of connected thin strips. Spectral analysis of Laplace
and Schr\"odinger operators on these structures has therefore
attracted a lot of attention, see
e.g.~\cite{carl,exner,kuch,ks,naimark-solomyak,sob-sol,solomyak}.

A metric tree $\Gamma$ consists of a set of vertices
and a set of edges (branches), the edges being one dimensional intervals
connecting the vertices; see Section \ref{trees} for details. In
\cite{kov} it was proved that the behavior of $E_1(\alpha)$ then
depends on the global structure of the tree. More precisely, if $V$
decays fast enough, then the asymptotic behavior of $E_1(\alpha)$ is
again uniform and given by
\begin{equation} \label{tree-basic}
E_1(\alpha) \, \sim \, \left( \alpha \int_{\Gamma} V\right)^{\frac {2}{2-d}},
\quad \alpha\to 0, \quad 1\leq d <2.
\end{equation}
where $d$ is the so-called dimension of the tree. Roughly speaking,
the value of $d$ tells us how fast the number of branches of the
tree increases as a function of the distance from its root, see
Section \ref{trees} for a precise definition.

In this work we will study the interplay between the global
structure of the tree, i.e.~its dimension d, and the decay of $V$.
In particular, it will be shown that if $V$ decays as $x^{-\gamma}$
with $1 < \gamma \leq d \leq 2, \gamma \neq 2$, then the
corresponding asymptotics of $E_1(\alpha)$ is again fully determined
by the behavior of $V$ at infinity, that is, by the value of
$\gamma$, see Theorem \ref{thm:main}. It is easily seen that such
potentials are not integrable on $\Gamma$, which means that the
method of \cite{simon, bgs,kov} cannot be applied and a different
approach is needed. In order to prove the main result we proceed in
several steps, which we now briefly outline.

In Section \ref{reduction} we show that $E_1(\alpha)$ is bounded
from above and from below by the infimum of the spectra of certain
auxiliary operators $H_\alpha^-$ and $H_\alpha^+$ which act in
a weighted $L^2$ space on the half-line $(0,\infty)$; see Lemma
\ref{lem:infspecsame}. This enables us to reduce the problem of the
asymptotical behavior of the ground state of $-\Delta -\alpha V$ on
$\Gamma$ to the problem of finding the asymptotical behavior of the
ground state of certain Schr\"odinger operators in weighted
$L^2$ spaces on the half-line with a weight which
depends parametrically on $d$. These operators are
analysed in Section \ref{sec:bottomspectrum}; see Theorem
\ref{thm:estimates}.

The main technical tools used in the paper are the Birman-Schwinger
principle and the Fourier-Bessel transformation. In
fact, the derivation of the latter transform may be considered
interesting on its own merits. It provides an explicit
formula for the unitary operator which transforms the
one-dimensional Laplace operator on $\lspace 2 {(0,\infty), (1+t)^a dt},\,
a \geq 0$ to a multiplication operator on $\ltwo$. We
therefore believe that it might be of a general interest also for
other applications. For the convenience of the reader, this material
together with the Weyl-Titchmarsch-Kodaira Theorem is described in
the Appendices.

%%%%%%%%%%%%%%%%%%%%%%%%%%%%%%%%%%%%%%%%%%%%%%%%%%%%%%%%%%%%%%%%%%%%%%%%%%%%%%%%
\section{Main Result}
%%%%%%%%%%%%%%%%%%%%%%%%%%%%%%%%%%%%%%%%%%%%%%%%%%%%%%%%%%%%%%%%%%%%%%%%%%%%%%%%

For functions $f,g: (0,\infty) \to \reals$,
we will use the notation $f(x) \approx g(x)$ to mean that
there are constants $C_1, C_2 > 0$ such that $C_1g(x) \leq f(x) \leq
C_2g(x)$ for any $x > 0$,
and the notation $f(x) \asymp g(x), x \to 0+$ to mean that there are
constants $D_1,D_2 > 0$ such that $D_1g(x) < f(x) < D_2g(x)$ for every
sufficiently small $x > 0$.

Having introduced the necessary notation we are in a position to
state the main result of the paper:

\begin{thm} \label{thm:main}
  Let $1 < \gamma \leq d \leq 2$, $\gamma \neq 2$. Suppose that
  $\Gamma$ is a regular metric tree of dimension $d$. Define the
  Neumann Laplacian $-\Delta$ in $\lspace 2 \Gamma$.
  Suppose that $V$ is non-negative, measurable, and such that
  \begin{displaymath}
    V(t) \approx \frac{1}{(1+t)^\gamma}.
  \end{displaymath}
  Let $\tilde V : \Gamma \to \complex$ be defined by $\tilde V(x) =
  V(|x|)$ for $x \in \Gamma$. Then we have:
  \begin{enumerate}[{\normalfont (i)}]
  \item If $1 < \gamma < d \leq 2$, then
    \begin{displaymath}
      \inf \spec{-\Delta - \alpha \tilde V}
      \asymp - \alpha^{\frac{2}{2-\gamma}},
      \quad \alpha \to 0+.
    \end{displaymath}
  \item If $1 < \gamma = d < 2$, then
    \begin{displaymath}
      \inf \spec{-\Delta - \alpha \tilde V}
      \asymp - \left|\alpha \log \alpha\right|^{\frac{2}{2-\gamma}},
      \quad \alpha \to 0+.
    \end{displaymath}
  \end{enumerate}
\end{thm}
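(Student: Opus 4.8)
The plan is to exploit the radial symmetry of $\tilde V$ to reduce the problem on $\Gamma$ to a one-dimensional weighted problem, and then to extract the threshold asymptotics by combining the Birman--Schwinger principle with the Fourier--Bessel transform. First I would use that $\tilde V(x)=V(|x|)$ depends only on the distance to the root: on the orthogonal complement of the symmetric (radial) functions the Neumann Laplacian obeys a Hardy-type lower bound, so the bottom of $\spec{-\Delta-\alpha\tilde V}$ is attained on the symmetric subspace. This identifies the relevant operator with a half-line operator of the form $A_\alpha=-w^{-1}(w\,u')'-\alpha V$ acting in $\lspace 2 {(0,\infty),w\,dt}$ with Neumann condition at $0$, where the weight satisfies $w(t)\approx(1+t)^{d-1}$. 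Because the hypotheses only give the two-sided bounds $V(t)\approx(1+t)^{-\gamma}$ and $w(t)\approx(1+t)^{d-1}$, I would sandwich $A_\alpha$ between model operators $H_\alpha^\pm$ with exact power weights, so that $\inf\spec{H_\alpha^-}\le\inf\spec{-\Delta-\alpha\tilde V}\le\inf\spec{H_\alpha^+}$; this is the content of Lemma \ref{lem:infspecsame}. It then suffices to prove that $\inf\spec{H_\alpha^\pm}$ have the common asymptotics claimed in Theorem \ref{thm:estimates}.

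Write $\inf\spec{H_\alpha^\pm}=-\kappa(\alpha)^2$ with $\kappa\to0+$ as $\alpha\to0+$. By the Birman--Schwinger principle, $-\kappa^2$ is the bottom eigenvalue precisely when the largest eigenvalue $\mu_1(\alpha,\kappa)$ of the compact operator $K(\kappa)=\alpha\,\sqrt V\,(A_0+\kappa^2)^{-1}\sqrt V$ equals $1$, where $A_0$ is the free weighted Laplacian; since $\mu_1$ is strictly decreasing in $\kappa$, the asymptotics of $\kappa(\alpha)$ follow by solving $\mu_1(\alpha,\kappa)=1$. To make $K(\kappa)$ explicit I would diagonalize $A_0$ by the Fourier--Bessel transform derived in the Appendix, which maps $A_0$ to multiplication by $p^2$ on a space $\lspace 2 {(0,\infty),d\mu}$ whose spectral measure satisfies $d\mu(p)\approx p^{d-1}\,dp$ near $p=0$. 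This representation yields an explicit resolvent kernel in terms of Macdonald functions and, crucially, exposes how $K(\kappa)$ degenerates at the spectral threshold $p=0$.

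The heart of the matter is the rate at which $\mu_1(\alpha,\kappa)$ diverges as $\kappa\to0$. The rescaling $t\mapsto t/\kappa$ maps $K(\kappa)$, up to the precise profiles of $V$ and $w$, onto $\alpha\,\kappa^{\gamma-2}$ times a $\kappa$-independent operator $\tilde K$ with kernel $x^{-\gamma/2}\tilde G(x,y)\,y^{-\gamma/2}$ relative to the measure $y^{d-1}\,dy$, where $\tilde G$ is the resolvent kernel at energy $-1$. When $1<\gamma<d$ the integral defining $\tilde K$ converges at both ends, so its top eigenvalue is a finite positive constant; hence $\mu_1(\alpha,\kappa)\asymp\alpha\,\kappa^{\gamma-2}$, and $\mu_1=1$ gives $\kappa\asymp\alpha^{1/(2-\gamma)}$, that is $-\kappa^2\asymp-\alpha^{2/(2-\gamma)}$, which is case (i). When $\gamma=d$ the same integral diverges logarithmically, and the divergence is cut off only at the scale set by the onset of the power-law decay of $V$, producing a factor $\log(1/\kappa)$: here $\mu_1(\alpha,\kappa)\asymp\alpha\,\kappa^{\gamma-2}\log(1/\kappa)$, and solving $\mu_1=1$ yields $\kappa^{2-\gamma}\asymp\alpha\,|\log\alpha|$, giving case (ii). Squeezing the tree operator between $H_\alpha^-$ and $H_\alpha^+$, whose constants differ but whose power-and-logarithm structure agrees, then delivers Theorem \ref{thm:main}.

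I expect the main obstacle to be the uniform two-sided control of $\mu_1(\alpha,\kappa)$ as both parameters degenerate simultaneously: one must separate the genuinely divergent near-threshold contribution from a uniformly bounded remainder, and in the borderline case $\gamma=d$ track an integral that is only logarithmically divergent. It is precisely the explicit Fourier--Bessel representation with the shifted weight $(1+t)^{a}$ that makes these estimates, and hence the matching upper and lower bounds for $H_\alpha^{\pm}$, tractable.
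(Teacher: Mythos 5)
Your overall architecture coincides with the paper's: the reduction to half-line operators $H_\alpha^\pm$ with exact power weight is Lemma \ref{lem:infspecsame}, and the threshold analysis is indeed carried out via the Birman--Schwinger principle and the Fourier--Bessel transform of the Appendix. The gap lies in your central quantitative claim, namely that the rescaling $t\mapsto t/\kappa$ turns the Birman--Schwinger operator into $\alpha\kappa^{\gamma-2}$ times a fixed, $\kappa$-independent operator $\tilde K$ with finite top eigenvalue. This is false as stated: neither the weight $(1+t)^{d-1}$ nor the potential class $(1+t)^{-\gamma}$ is homogeneous, so after rescaling one obtains the weight $(\kappa+s)^{d-1}$ and a potential comparable to $(\kappa+s)^{-\gamma}$, both still $\kappa$-dependent, and (in the Liouville-transformed picture of Lemma \ref{lem:equivops}) a boundary condition $\varphi'(0)=\tfrac{d-1}{2\kappa}\varphi(0)$ that degenerates as $\kappa\to 0$. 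The formal limit replaces the potential by $s^{-\gamma}$ with $\gamma>1$, which is strongly singular at the origin; whether this limit has a finite largest eigenvalue (equivalently, whether the singular limiting potential is form-bounded relative to the limiting Bessel form), and whether the top eigenvalue of the genuinely $\kappa$-dependent family is bounded above \emph{and} below uniformly as $\kappa\to0$, is precisely the substance of the theorem. You flag this as ``the main obstacle,'' but your sketch offers no mechanism for it: in particular, nothing produces the lower bound $\mu_1(\alpha,\kappa)\geq c\,\alpha\kappa^{\gamma-2}$ (with an extra $\log(1/\kappa)$ when $\gamma=d$), and without it, solving $\mu_1=1$ yields only one of the two inequalities claimed in the theorem.

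It is worth seeing how the paper circumvents exactly these two points, because the fixes are simpler than uniform spectral control of a rescaled operator family. For the upper bound on the binding energy $E(\alpha)$ it never estimates the top Birman--Schwinger eigenvalue itself, but its trace: since $Q_E\geq 0$, one has $\alpha^{-1}\leq \tr Q_{E(\alpha)}$, and the trace (Hilbert--Schmidt) bound is a purely computational $L^2$ estimate of the explicit Fourier--Bessel kernel (Lemmas \ref{lem:trineq} and \ref{lem:l-est}); there the rescaling appears only as a change of variables inside a convergent integral, where the inhomogeneity is harmless because only one-sided bounds on the Bessel functions are needed. For the opposite bound the paper abandons the Birman--Schwinger operator altogether and inserts explicit test functions into the quadratic form: $e^{-\delta x}$ with $\delta\asymp\alpha^{1/(2-\gamma)}$ when $\gamma<d$, and a truncated linear profile of width $\asymp|\alpha\log\alpha|^{-1/(2-d)}$ when $\gamma=d$ (Lemma \ref{lem:testfunc}); this elementary computation supplies exactly the half of the estimate your sketch leaves open, and is also the natural way to prove your claimed lower bound on $\mu_1$ if you prefer to phrase everything through the Birman--Schwinger eigenvalue. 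Finally, note that in the borderline case the inversion of $\mu_1=1$, i.e.\ of $\kappa^{2-\gamma}\asymp\alpha\log(1/\kappa)$, is not entirely automatic either; the paper handles it with the Lambert $W$-function in the proof of Theorem \ref{thm:estimates}.
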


\begin{rem}
Note that in contrast to the case treated in \cite{kov},
the asymptotics of $\inf \spec{-\Delta - \alpha \tilde V}$, for
$\gamma < d$, are
independent of $d$ and fully determined by the parameter $\gamma$. In
other words, they are determined by the behavior of $V$ at infinity.
This is analogous to the
regime $\gamma<1$ for one-dimensional Schr\"odinger operators, \cite{klaus}.

Moreover, in the border-line case $\gamma=d$ there is a logarithmic
correction to the power-like law. This is again reminiscent of the
behavior of one-dimensional Schr\"odinger operators
in the case where $V(x) \sim 1/(1+|x|)$, corresponding to $\gamma=1$;
see \cite{klaus}.
\end{rem}

As mentioned in the introduction, the proof of Theorem \ref{thm:main}
will proceed in several steps. The main result below is
Theorem \ref{thm:estimates}, that together with Lemma
\ref{lem:infspecsame} proves the main theorem.
We start with the preliminary section
introducing Schr\"odinger operators on regular metric trees.

%%%%%%%%%%%%%%%%%%%%%%%%%%%%%%%%%%%%%%%%%%%%%%%%%%%%%%%%%%%%%%%%%%%%%%%%%%%%%%%%
\section{Spectral Theory on Regular Metric Trees}
%%%%%%%%%%%%%%%%%%%%%%%%%%%%%%%%%%%%%%%%%%%%%%%%%%%%%%%%%%%%%%%%%%%%%%%%%%%%%%%%

The geometry of regular metric trees and the definition of the
Laplacian on those trees are discussed thoroughly in
\cite{solomyak, naimark-solomyak}.
We
give here a brief summary that serves our purposes.

%-------------------------------------------------------------------------------
\subsection{Regular Metric Trees}
%-------------------------------------------------------------------------------
\label{trees}
Let $\Gamma \subset \reals^2$ be a rooted metric tree with root $o$.
Let $\preceq$ be the natural partial ordering on $\Gamma$ defined by
letting $x \preceq y$ mean that either $x = y$ or that $x$ is on the
path from $o$ to $y$. We use the notation $x \prec y$ to mean that $x
\preceq y$ and $x \neq y$. For a point $x \in \Gamma$, let us denote by
$|x|$ the length of the path in $\Gamma$ from $o$ to $x$.

Let $\V = \V(\Gamma)$ be the set of vertices in $\Gamma$ and $\E =
\E(\Gamma)$ the set of edges in $\Gamma$. Clearly $\V \subset \Gamma$
and $e \subset \Gamma$ for each $e \in \E$. For simplicity we
consider each edge $e \in \E$ to be a straight line segment of
positive length, containing its endpoints. Let $v_1(e)$ and $v_2(e)$
be the endpoints of the edge $e$, ordered such that $v_1(e) \prec
v_2(e)$. Hence, for $e \in \E(\Gamma)$,
\begin{displaymath}
  e = \left\{
    x \in \Gamma \st v_1(e) \preceq x \preceq v_2(e)
  \right\}
  = \left\{ (1-t)v_1(e) + tv_2(e) \st 0 \leq t \leq 1\right\}.
\end{displaymath}

For any point $x \in \Gamma$, let $b(x)$ be its
\emph{branching number} definied by
\begin{displaymath}
  b(x) = \lim_{\epsilon \to 0+}
  \#
  \left\{y \in \Gamma \st
    y \succeq x, |y| = |x| + \epsilon
  \right\}.
\end{displaymath}
To clarify, if $x$ is a vertex of $\Gamma$, then $b(x)$ is the
number of edges emanating from $x$, and if $x$ is not a vertex, then
$b(x) = 1$. Naturally, we will assume throughout that $\Gamma$ is
such that $b(x) > 1$ for any vertex $x \neq o$, and that $b(o) = 1$. 
In other words,
there are no vertices, except $o$, that have only one emanating
edge.

\begin{defn}
  The tree $\Gamma$ is said to be \emph{regular} if $b(x) = b(y)$ for
  all points $x,y \in \Gamma$ satisfying $|x| = |y|$.
\end{defn}

Now we define the branching function $g_\Gamma$ by
\begin{displaymath}
  g_\Gamma(t) = \# \left\{
    x \in \Gamma \st |x| = t
  \right\},
  \quad t \geq 0.
\end{displaymath}
\begin{defn}
  The tree $\Gamma$ is said to have \emph{dimension} $d$ if there are
  constants $C_1, C_2 > 0$ such that
  \begin{displaymath}
    C_1(1+t)^{d-1} \leq g_\Gamma(t) \leq C_2(1+t)^{d-1}, \quad t \geq 0.
  \end{displaymath}
\end{defn}

Often one studies the \emph{height} $h(\Gamma)$ and the
\emph{reduced height} $L(\Gamma)$ of $\Gamma$ defined respectively
by
\begin{displaymath}
  h(\Gamma) = \sup_{x \in \Gamma} |x|
  \quad \textrm{ and } \quad
  L(\Gamma) = \int_0^{h(\Gamma)} \frac{dt}{g_\Gamma(t)}.
\end{displaymath}
We will be interested in regular $d$-dimensional trees of infinite reduced
height, for which $g_\Gamma$ is growing. Given the above
definitions, this means that $\Gamma$ is of infinite height and that
$1 < d \leq 2$.

%-------------------------------------------------------------------------------
\subsection{The Laplace and Schr\"odinger operators on $\Gamma$}
%-------------------------------------------------------------------------------

Here, $\Gamma$ will be a fixed, regular, $d$-dimensional metric tree,
where $1 < d \leq 2$. Define the Hilbert space
$\lspace 2 \Gamma$ as the space of
functions $f : \Gamma \to \complex$ satisfying that
$f \upharpoonright e \in \lspace 2 e$ for any $e \in \E(\Gamma)$ and that
\begin{displaymath}
  \left\|f\right\|^2 \coloneq
  \sum_{e \in \E(\Gamma)}\int_e |f(x)|^2 \, dx < \infty.
\end{displaymath}
Similarly, the Sobolev space $\sobspace 1 \Gamma$ is defined as
containing the continuous functions $f : \Gamma \to \complex$ such that
$f \upharpoonright e \in \sobspace 1 e$ for any $e \in \E(\Gamma)$ and
such that
\begin{displaymath}
  \left\|f\right\|^2_1 \coloneq
  \sum_{e \in \E(\Gamma)}\int_e \left(|f'(x)|^2 + |f(x)|^2\right) \,dx
  < \infty.
\end{displaymath}
The \emph{Neumann Laplacian} $-\Delta_\Gamma$ is the (unique) self-adjoint
operator on $\lspace 2 \Gamma$ associated with the closed quadratic form
\begin{displaymath}
  h_\Gamma[f] = \sum_{e \in \E(\Gamma)} \int_e |f'(x)|^2 \, dx
\end{displaymath}
with domain $\qdom{h_\Gamma} = \sobspace 1 \Gamma$.

%%%%%%%%%%%%%%%%%%%%%%%%%%%%%%%%%%%%%%%%%%%%%%%%%%%%%%%%%%%%%%%%%%%%%%%%%%%%%%%%
\section{Reduction to an Operator on the Half-Line}
%%%%%%%%%%%%%%%%%%%%%%%%%%%%%%%%%%%%%%%%%%%%%%%%%%%%%%%%%%%%%%%%%%%%%%%%%%%%%%%%
\label{reduction}

As before, $\Gamma$ is a fixed, regular, $d$-dimensional metric tree,
where $1 < d \leq 2$. We will be interested in the Schr\"odinger operator
$-\Delta_\Gamma - \alpha \tilde V$ for $\alpha > 0$.
It turns out
that bottom of the spectrum of $-\Delta_\Gamma - \alpha \tilde V$ is
described by studying a certain Schr\"odinger operator on the
half-line. Recall that by definition, there are constants $C_1, C_2 >
0$ such that
\begin{displaymath}
  C_1(1+t)^{d-1} \leq g_\Gamma(t) \leq C_2 (1+t)^{d-1}.
\end{displaymath}
Let
\begin{displaymath}
  E^+ = \frac{C_1}{C_2}
  \quad \textrm{ and } \quad
  E^- = \frac{C_2}{C_1}.
\end{displaymath}

\begin{lem} \label{lem:infspecsame}
  Let $V$ be a bounded, non-negative and measurable function on
  $(0,\infty)$. Define a function $\tilde V$ on $\Gamma$ by
  \begin{displaymath}
    \tilde V(x) = V(|x|), \quad x \in \Gamma,
  \end{displaymath}
  and the Hilbert space
  $\hilbert = \lspace 2 {(0,\infty), (1+t)^{d-1} \, dt}$.
  For $\alpha > 0$, let $H_\alpha^\pm$ be the self-adjoint operator in
  $\hilbert$ associated with the closed quadtratic form
  \begin{displaymath}
    h_\alpha^\pm[u] = \int_0^\infty \left(
      \left|u'(t)\right|^2 - \alpha E^\pm V(t)\left|u(t)\right|^2
    \right) (1+t)^{d-1} \, dt,
  \end{displaymath}
  with domain
  $\qdom{h_\alpha^\pm} = \sobspace 1 {(0,\infty), (1+t)^{d-1}\,dt}$.
  Then, for any $\alpha > 0$ for which $\inf \spec {H_\alpha^+} < 0$,
  \begin{displaymath}
    \inf \spec{H_\alpha^-}
    \leq \inf \spec{-\Delta_\Gamma - \alpha \tilde V} 
    \leq \inf \spec{H_\alpha^+}.
  \end{displaymath}
\end{lem}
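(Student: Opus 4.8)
The plan is to prove both inequalities by comparing the quadratic form on the tree with the half-line forms through a symmetrization argument. The key observation is that the branching function bounds let us sandwich the tree geometry between two weighted half-line problems. I would first reduce the tree form to a radial one-dimensional form by averaging over each sphere $\{x \in \Gamma : |x| = t\}$. Given $f \in \sobspace{1}{\Gamma}$, define its radial projection $u(t)$ by averaging $f$ over the $g_\Gamma(t)$ points at distance $t$ from the root. The point of this projection is that it never increases the Dirichlet energy while exactly preserving the potential energy, since $\tilde V$ depends only on $|x|$.

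\textbf{Upper bound.} For the inequality $\inf \spec{-\Delta_\Gamma - \alpha \tilde V} \leq \inf \spec{H_\alpha^+}$, I would proceed by constructing trial functions on $\Gamma$ from trial functions on the half-line. Given $u \in \qdom{h_\alpha^+}$, set $f(x) = u(|x|)$, which is radially symmetric and continuous on $\Gamma$. A direct computation using the coarea-type identity $\sum_{e} \int_e F(|x|)\,dx = \int_0^\infty F(t)\, g_\Gamma(t)\, dt$ converts the tree form into a weighted half-line integral. The Dirichlet term becomes $\int_0^\infty |u'(t)|^2 g_\Gamma(t)\,dt$ and the potential term becomes $\alpha \int_0^\infty V(t)|u(t)|^2 g_\Gamma(t)\,dt$. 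Using $g_\Gamma(t) \leq C_2(1+t)^{d-1}$ on the negative potential term and $g_\Gamma(t) \geq C_1(1+t)^{d-1}$ on the positive kinetic term, together with $E^+ = C_1/C_2$, I would bound the tree Rayleigh quotient from above by $C_1$ times the form $h_\alpha^+$ evaluated in the space $\hilbert$. Taking the infimum over all such $u$ and invoking the variational characterization of $\inf \sigma$ then yields the claim, once one checks that the normalization weights $g_\Gamma$ versus $(1+t)^{d-1}$ cancel correctly in the quotient.

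\textbf{Lower bound.} For $\inf \spec{H_\alpha^-} \leq \inf \spec{-\Delta_\Gamma - \alpha \tilde V}$, I would run the projection argument in the other direction: starting from an arbitrary $f \in \sobspace{1}{\Gamma}$, pass to its radial average $u$ and show that the tree Rayleigh quotient dominates the $h_\alpha^-$ quotient. The essential analytic input is that averaging contracts the kinetic energy (Cauchy--Schwarz applied edge-by-edge, exploiting regularity so that all $b(x)$ branches at a given height behave identically) while leaving the potential energy unchanged because $\tilde V$ is radial. Here I would use $g_\Gamma(t) \geq C_1(1+t)^{d-1}$ on the kinetic term and $g_\Gamma(t) \leq C_2(1+t)^{d-1}$ on the potential term, with $E^- = C_2/C_1$, to produce the opposite scaling.

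The main obstacle will be making the radial projection rigorous at the vertices and verifying that it maps $\sobspace{1}{\Gamma}$ into $\sobspace{1}{(0,\infty),(1+t)^{d-1}dt}$ with the claimed contraction of the Dirichlet form. Continuity of $f$ at the branching vertices is what guarantees the averaged function $u$ is itself continuous, hence admissible, and the inequality $\bigl|\tfrac{d}{dt}\langle f\rangle(t)\bigr|^2 \leq \langle |\partial_t f|^2\rangle(t)$ must be established carefully since the number of branches jumps at each vertex. The hypothesis that $\inf \spec{H_\alpha^+} < 0$ is needed precisely to ensure that the variational infimum is attained by (or approximated by) functions on which these form comparisons are genuinely effective, so that the three infima line up in the stated order rather than collapsing to zero trivially.
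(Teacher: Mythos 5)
Your upper bound is the right idea in outline---lifting a half-line trial function $u$ to the radial function $f(x)=u(|x|)$ and converting the tree form into a weighted half-line form via the coarea identity is exactly the correct mechanism---but you have paired the bounds on $g_\Gamma$ with the wrong terms. To bound the tree form from above by a multiple of $h_\alpha^+[u]$ you must use $g_\Gamma(t)\le C_2(1+t)^{d-1}$ on the \emph{kinetic} term and $g_\Gamma(t)\ge C_1(1+t)^{d-1}$ on the \emph{potential} term; this gives $\int_0^\infty(|u'|^2-\alpha V|u|^2)g_\Gamma\,dt \le C_2\,h_\alpha^+[u]$ with $E^+=C_1/C_2$. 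Your pairing (upper bound of $g_\Gamma$ on the potential, lower bound on the kinetic term) produces a \emph{lower} bound on the tree form, i.e.\ it is the pairing that belongs to the $H_\alpha^-$ half of the argument. Moreover, the factor $C_2$ only cancels against the denominator bound $\int|u|^2 g_\Gamma\,dt\le C_2\int|u|^2(1+t)^{d-1}dt$ because the numerator is negative: this is the actual role of the hypothesis $\inf\spec{H_\alpha^+}<0$, not ``attainment'' of the infimum. With a positive numerator the Rayleigh-quotient comparison between the two weights fails.

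The serious gap is in your lower bound. The claim that spherical averaging leaves the potential energy unchanged ``because $\tilde V$ is radial'' is false: writing $u(t)=g_\Gamma(t)^{-1}\sum_{|x|=t}f(x)$, Cauchy--Schwarz gives $|u(t)|^2 g_\Gamma(t)\le\sum_{|x|=t}|f(x)|^2$, in general strictly, so averaging \emph{contracts} the potential energy; since that term enters the form with a minus sign, this pushes the averaged form upward and the comparison becomes indeterminate (kinetic energy shrinks, which helps, but the potential magnitude and the $L^2$ norm shrink as well, which hurts). Worse, for $f$ orthogonal to the radial subspace---zero average on every sphere, e.g.\ antisymmetric between two sibling branches---your averaged function is $u\equiv 0$ and the argument says nothing, yet these are precisely the functions a lower bound must control. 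The paper avoids this entirely: it invokes the orthogonal decomposition of $-\Delta_\Gamma-\alpha\tilde V$ for regular trees (Naimark--Solomyak), under which $L^2(\Gamma)$ splits into invariant subspaces; the radial one carries the operator $A_\alpha$ with weight $g_\Gamma$, while the non-radial components are unitarily equivalent to half-line operators with Dirichlet conditions at heights $t_k>0$ whose spectral infima dominate $\inf\spec{A_\alpha}$ (extend their trial functions by zero). This yields $\inf\spec{-\Delta_\Gamma-\alpha\tilde V}=\inf\spec{A_\alpha}$, after which both stated inequalities follow from the weight-comparison trial-function argument. Without this decomposition, or a genuine substitute that handles zero-average functions, your lower bound does not go through.
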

\begin{rem}
  If, as in Theorem \ref{thm:main}, $V(t) \approx 1/(1+t)^\gamma$, where
  $1 < \gamma \leq d \leq 2$, $\gamma \neq 2$, then Lemma \ref{lem:testfunc}
  below shows that $\inf \spec{H_\alpha^+} < 0$ as $\alpha \to 0+$.
  Therefore the lemma is applicable.
\end{rem}
\begin{proof}
  For $\alpha > 0$, define the quadratic form
  $a_\alpha$ in $\lspace 2 {(0,\infty),
  g_\Gamma(t) \, dt}$ by
  \begin{displaymath}
    a_\alpha[u] = \int_0^\infty \left(
      |u'(t)|^2 - \alpha V(t)|u(t)|^2
    \right) \, g_\Gamma(t) \, dt,
  \end{displaymath}
  with domain $\qdom{a_\alpha} = \sobspace 1 {(0,\infty), g_\Gamma}$.
  Let $A_\alpha$ be the
  self-adjoint operator associated with $a_\alpha$. By using the
  standard orthogonal decomposition of the operator $-\Delta_\Gamma -
  \alpha \tilde V$, as described in
  \cite{naimark-solomyak,naimark-solomyak2} 
  and the arguments of \cite[Sec.~5.3]{kov},
  it is seen that
  \begin{equation}
    \inf \spec {-\Delta_\Gamma - \alpha \tilde V}
    = \inf \spec {A_\alpha}.
  \end{equation}

  The following is a variant of the techniques employed in \cite{kov}.
  Assume that $\alpha > 0$ is such that $\inf \spec {H_\alpha^+} <
  0$. Choose an arbitrary $\epsilon$ with
  $0 < \epsilon < \left|\inf \spec{H_\alpha^+}\right|$.
  Let $u \in \qdom {h_\alpha^+}$ be such that
  \begin{displaymath}
    \frac{h_\alpha^+[u]}{\int_0^\infty |u(t)|^2 (1+t)^{d-1} \, dt}
    \leq \inf \spec {H_\alpha^+} + \epsilon.
  \end{displaymath}
  Now, $u \in \qdom{a_\alpha}$ and
  \begin{displaymath}
    a_\alpha[u] \leq
    C_2 h_\alpha^+[u].
  \end{displaymath}
  In particular,
  \begin{displaymath}
    h_\alpha^+[u] < 0
    \quad \textrm{ and } \quad
    a_\alpha[u] < 0.
  \end{displaymath}
  We also have that
  \begin{displaymath}
    \int_0^\infty |u(t)|^2 g_\Gamma(t) \, dt
    \leq C_2 \int_0^\infty |u(t)|^2 (1+t)^{d-1} \, dt,
  \end{displaymath}
  and therefore,
  \begin{displaymath}
    \inf \spec {A_\alpha}
    \leq \frac{a_\alpha[u]}{\int_0^\infty |u(t)|^2 g_\Gamma(t) \, dt}
    \leq \frac{h_\alpha[u]}{\int_0^\infty |u(t)|^2 (1+t)^{d-1} \, dt}
    \leq \inf \spec {H_\alpha^+} + \epsilon.
  \end{displaymath}
  Since $\epsilon$ was arbitrary, it follows that
  \begin{displaymath}
    \inf \spec {A_\alpha}
    \leq \inf \spec {H_\alpha^+}.
  \end{displaymath}
  Similarly, it is shown that
  \begin{displaymath}
    \inf \spec {H_\alpha^-} \leq \inf \spec {A_\alpha}. \qedhere
  \end{displaymath}
\end{proof}

%%%%%%%%%%%%%%%%%%%%%%%%%%%%%%%%%%%%%%%%%%%%%%%%%%%%%%%%%%%%%%%%%%%%%%%%%%%%%%%%
\section{Estimates of the Bottom of the Spectrum}
%%%%%%%%%%%%%%%%%%%%%%%%%%%%%%%%%%%%%%%%%%%%%%%%%%%%%%%%%%%%%%%%%%%%%%%%%%%%%%%%
\label{sec:bottomspectrum}

Consider the space
$\hilbert = \lspace{2}{(0,\infty) ; (1+x)^{d-1}\,dx}$ and a
measurable, bounded potential
$V$. For $\alpha > 0$, let
$H_\alpha$ be the operator in $\hilbert$
determined by the closed quadratic form
\begin{displaymath}
  h_\alpha[u]  = \int_0^\infty \left(
    \left|u'(x)\right|^2 - \alpha V(x)\left|u(x)\right|^2
    \right) (1+x)^{d-1}  \, dx
\end{displaymath}
with domain
\begin{displaymath}
  \qdom{h_\alpha} = \left\{ u \in \hilbert \st u' \in \hilbert \right\}.
\end{displaymath}
Here, $d \in (1,2]$ is the dimension of the underlying tree.

\begin{rem}
  It can be seen by the standard arguments, see
  e.g.~\cite{kov}, that under the above conditions on the potential
  $V$, the essential spectrum of $H_\alpha$ is
  $[0,\infty)$.
  Hence
  the negative spectrum of $H_\alpha$ can only contain eigenvalues of
  finite multiplicity. Furthermore,
  \begin{displaymath}
    \inf \spec{H_\alpha} \to 0, \quad \alpha \to 0+.
  \end{displaymath}
\end{rem}

\begin{thm} \label{thm:estimates}
  Suppose that $V$ is measurable and that there are positive constants
  $C_1$ and $C_2$ such that for any $x > 0$,
  \begin{displaymath}
    \frac{C_1}{(1+x)^\gamma} \leq V(x) \leq \frac{C_2}{(1+x)^\gamma}.
  \end{displaymath}
  \begin{enumerate}[{\normalfont (i)}]
  \item
    If $1 < \gamma < d \leq 2$, then there are constants
    $D_1, D_2 > 0$ such that
    \begin{displaymath}
      -D_1\alpha^{\frac{2}{2-\gamma}}
      < \inf \spec{H_\alpha}
      < -D_2\alpha^{\frac{2}{2-\gamma}},
      \quad \alpha \to 0+.
    \end{displaymath}
  \item
    If $1 < \gamma = d < 2$, then there are constants
    $D_1, D_2 > 0$ such that
    \begin{displaymath}
      -D_1 \left|\alpha \log \alpha\right|^{\frac{2}{2-\gamma}}
      < \inf \spec{H_\alpha}
      < -D_2 \left|\alpha \log \alpha\right|^{\frac{2}{2-\gamma}},
      \quad \alpha \to 0+.
    \end{displaymath}
  \end{enumerate}
\end{thm}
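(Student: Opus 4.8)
The plan is to prove the two-sided bounds separately: the upper bound on $\inf\spec{H_\alpha}$ (that the bottom of the spectrum is sufficiently negative) by a variational argument, and the lower bound (that it is not too negative) via the Birman--Schwinger principle combined with the Fourier--Bessel transform.

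First, the upper bound. Writing $w(x)=(1+x)^{d-1}$, I would insert into the Rayleigh quotient a trial function adapted to a single length scale $R=R(\alpha)\to\infty$. In case (i) take $u_R$ equal to $1$ on $[0,R]$ and cut off linearly on $[R,2R]$; using $V(x)\approx(1+x)^{-\gamma}$ and $w(x)\approx x^{d-1}$ on the relevant large-$x$ region, the kinetic term is of order $R^{d-2}$, the potential term of order $\alpha R^{d-\gamma}$ (the integral $\int_1^R x^{d-1-\gamma}\,dx$ converges at its upper endpoint precisely because $\gamma<d$), and $\|u_R\|_\hilbert^2\approx R^d$. The quotient is thus of order $R^{-2}-c\,\alpha R^{-\gamma}$, minimized at $R\approx\alpha^{-1/(2-\gamma)}$, where it equals $-c'\alpha^{2/(2-\gamma)}$. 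In the borderline case (ii), $\gamma=d$, the potential integral becomes $\alpha\int_1^R x^{-1}\,dx\approx\alpha\log R$, so the quotient is of order $R^{-2}-c\,\alpha R^{-d}\log R$; optimizing the transcendental relation $R^{2-d}\log R\approx\alpha^{-1}$ gives $R\approx|\alpha\log\alpha|^{-1/(2-\gamma)}$ and quotient value $\approx-c'|\alpha\log\alpha|^{2/(2-\gamma)}$. This produces the required upper bounds on $\inf\spec{H_\alpha}$.

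For the lower bound I would use the Birman--Schwinger principle. Setting $H_0=H_\alpha+\alpha V$ (so $H_0\ge 0$, and $\sigma_{\mathrm{ess}}(H_\alpha)=[0,\infty)$ by the preceding Remark), the number of eigenvalues of $H_\alpha$ below $-E$ equals the number of eigenvalues of the positive operator $K(E)=\alpha V^{1/2}(H_0+E)^{-1}V^{1/2}$ exceeding $1$; since $\|K(E)\|$ is decreasing in $E$, it follows that $\inf\spec{H_\alpha}\ge-E$ whenever $\|K(E)\|\le1$. The Fourier--Bessel transform of the Appendix diagonalizes $H_0$ as multiplication by $k^2$ and yields an explicit Green's function $G_E(x,y)$ for $(H_0+E)^{-1}$. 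Its decisive feature is the small-$E$ scaling inherited from the fact that the constant function is the non-$L^2$ zero-energy resonance of $H_0$: up to the boundary layer near the root, $G_E(x,y)\approx E^{(d-2)/2}g_1(\sqrt E\,x,\sqrt E\,y)$, where $g_1$ is the $E=1$ Green's function. Writing $g_1(\xi,\eta)=\psi_-(\min(\xi,\eta))\,\psi_+(\max(\xi,\eta))/W$ with $\psi_-$ increasing and $\psi_+$ decreasing, and reading off the Bessel asymptotics of $\psi_\pm$, one gets the diagonal bound $g_1(\xi,\xi)\le C(1+\xi)^{1-d}$ and hence the product bound $g_1(\xi,\eta)\le C(1+\max(\xi,\eta))^{1-d}\le C(1+\xi)^{(1-d)/2}(1+\eta)^{(1-d)/2}$. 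Feeding this into $\langle f,K(E)f\rangle$ and applying Cauchy--Schwarz collapses the norm to a single integral, $\|K(E)\|\le C\alpha E^{(d-2)/2}\int_0^\infty(1+\sqrt E\,x)^{1-d}(1+x)^{d-1-\gamma}\,dx$.

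Splitting the last integral at $x=E^{-1/2}$ shows it is of order $E^{-(d-\gamma)/2}$ when $\gamma<d$ and of order $|\log E|$ when $\gamma=d$ (the logarithm being exactly the divergence of $\int_0^{E^{-1/2}}(1+x)^{-1}\,dx$). Hence $\|K(E)\|\lesssim\alpha E^{-(2-\gamma)/2}$ in case (i) and $\|K(E)\|\lesssim\alpha E^{-(2-\gamma)/2}|\log E|$ in case (ii); imposing $\|K(E)\|\le1$ forces $E\gtrsim\alpha^{2/(2-\gamma)}$, respectively $E\gtrsim|\alpha\log\alpha|^{2/(2-\gamma)}$, which are precisely the claimed lower bounds on $|\inf\spec{H_\alpha}|$. (For $\gamma<d$ the lower bound can alternatively be read off from a scale-invariant weighted Gagliardo--Nirenberg inequality with exponent $\theta=\gamma/2$ followed by Young's inequality, but this route degenerates exactly at the endpoint $\gamma=d$, which is why the resolvent analysis is needed.) I expect the main obstacle to be that very borderline case $\gamma=d$: because $V$ is non-integrable, the zero-energy resonance must be handled with an $E$-dependent cutoff at the scale $E^{-1/2}$, and it is the logarithmic growth of the cut-off resonance integral that generates the $|\alpha\log\alpha|$ correction; making the scaling $G_E\approx E^{(d-2)/2}g_1(\sqrt E\,\cdot\,,\sqrt E\,\cdot\,)$ rigorous for the weight $(1+x)^{d-1}$, rather than for the scale-covariant model weight $x^{d-1}$, is where the explicit Fourier--Bessel formulas are indispensable.
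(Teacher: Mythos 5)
Your two-sided strategy coincides with the paper's at the top level: the upper (negative) bound is proved variationally, exactly as in the paper's Lemma \ref{lem:testfunc} (the paper uses $e^{-\delta x}$ with $\delta\approx\alpha^{1/(2-\gamma)}$ in case (i) and precisely your linear cutoff at scale $|\alpha\log\alpha|^{-1/(2-d)}$ in case (ii)), and the lower bound goes through Birman--Schwinger. Your implementation of the lower bound, however, is genuinely different. The paper controls the \emph{trace} of $Q_E=(H_0+E)^{-1/2}V(H_0+E)^{-1/2}$: using the Fourier--Bessel transform (Lemmas \ref{lem:fbtransform} and \ref{lem:equiv-b-s-ops}) it writes $Q_E=L_EL_E^*$ with an explicit kernel, estimates the Hilbert--Schmidt integral in Lemma \ref{lem:l-est}, and then exploits that $-E(\alpha)$ is an eigenvalue to get $\alpha^{-1}\le\tr Q_{E(\alpha)}$; in the borderline case $\gamma=d$ this inequality must be \emph{inverted} in $E(\alpha)$, which is what the Lambert-W argument in the paper's proof of Theorem \ref{thm:estimates} accomplishes. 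You instead use the norm criterion ($\|K(E)\|<1$ implies $\inf\spec{H_\alpha}\ge-E$) together with a pointwise bound on the resolvent kernel, collapsed by Cauchy--Schwarz to the single integral $\alpha E^{(d-2)/2}\int_0^\infty(1+\sqrt{E}x)^{1-d}(1+x)^{d-1-\gamma}\,dx$, whose split at $x=E^{-1/2}$ indeed gives $E^{(\gamma-d)/2}$ resp.\ $|\log E|$. This buys two real simplifications: you need only ODE asymptotics of the two solutions $\psi_\pm$ (modified Bessel functions), not the full spectral transform; and since you only have to exhibit \emph{one} admissible $E$, you can plug in $E\approx|\alpha\log\alpha|^{2/(2-\gamma)}$ and verify $\|K(E)\|\le1$ directly, so no Lambert-W inversion is needed. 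The paper's trace bound is stronger information (it controls the number of Birman--Schwinger eigenvalues), but for this theorem the norm suffices, and your route is sound.

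One step in your kernel estimate is, however, false as stated, and it matters because case (i) includes $d=2$: the profile bound $g_1(\xi,\eta)\le C(1+\max(\xi,\eta))^{1-d}$, i.e.\ the claim that the weighted resolvent kernel is $\lesssim E^{(d-2)/2}$ in the bulk region $x,y\le E^{-1/2}$, fails at the endpoint $d=2$. There the order $\nu=(2-d)/2$ vanishes and the small-argument behavior of $K_0$ (equivalently $Y_0$ in the paper's representation) is logarithmic, so the kernel in the bulk is of size $|\log(\sqrt{E}(1+\max(x,y)))|$, not $O(1)$. This is exactly why the paper's Lemma \ref{lem:l-est} treats $d=2$ as a separate case, with the bound $c_3|\log x|\chi_{(0,1/2)}(x)$ in place of the power bound. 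The defect is harmless for the conclusion — inserting the logarithmically corrected bound into your integral still gives $O(E^{(\gamma-2)/2})$, since $\int_0^1 u^{1-\gamma}|\log u|\,du<\infty$ for $\gamma<2$ — but your proof needs this extra case to be complete. A smaller point of hygiene: be explicit about which operator's kernel your scaling refers to. For your $H_0=H_\alpha+\alpha V$ acting in the weighted space (whose zero-energy resonance is indeed the constant) the bound is as you wrote it; for the Appendix's half-line operator the resonance is $(1+x)^{(d-1)/2}$ and the kernel carries an extra factor $\left((1+x)(1+y)\right)^{(d-1)/2}$, which is precisely what produces the measure factor $(1+x)^{d-1}$ in your final integral — conflating the two would make the stated scaling $E^{(d-2)/2}g_1(\sqrt{E}x,\sqrt{E}y)$ wrong.
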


The proof of Theorem
\ref{thm:estimates} relies on the following two lemmas, of which the
first is proved in Appendix \ref{sec:birmanschwinger}.

\begin{lem} \label{lem:trineq}
  Suppose that $1 < \gamma \leq d \leq 2$, $\gamma \neq 2$,
  and that there is a $C > 0$
  such that $V$ satisfies
  \begin{displaymath}
    0 \leq V(x) \leq \frac{C}{(1+x)^\gamma}.
  \end{displaymath}
  Then there is a constant $D > 0$ such that for any $E > 0$,
  there is a non-negative
  trace-class operator $Q_E$ on $\ltwo$ whose trace satisfies
  \begin{displaymath}
    \tr Q_E \leq \left\{
      \begin{array}{ll}
        DE^\frac{\gamma-2}{2},
        & \quad \gamma < d, \\
        DE^{\frac{\gamma-2}{2}}\left(1+\left|\log E\right|\right),
        & \quad \gamma = d,
      \end{array}
    \right.
  \end{displaymath}
  and such that $\alpha^{-1}$ is an eigenvalue of $Q_E$ if and only
  if $-E$ is an eigenvalue of the operator $H_\alpha$.
\end{lem}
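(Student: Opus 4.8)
The plan is to realize $Q_E$ as a Birman--Schwinger operator and then to reduce the trace bound to a pointwise estimate of the diagonal of the free resolvent kernel. Write $H_\alpha = H_0 - \alpha V$, where $H_0$ is the non-negative operator on $\hilbert = \lspace{2}{(0,\infty),(1+x)^{d-1}\,dx}$ generated by the form $\int_0^\infty |u'|^2(1+x)^{d-1}\,dx$. Since $H_0\ge 0$, the resolvent $(H_0+E)^{-1}$ is bounded and non-negative for every $E>0$, so $Q_E\coloneq V^{1/2}(H_0+E)^{-1}V^{1/2}$ is a well-defined non-negative operator. First I would record the Birman--Schwinger correspondence: a nonzero $u$ solves $H_\alpha u = -Eu$ if and only if $\psi = V^{1/2}u$ satisfies $Q_E\psi = \alpha^{-1}\psi$. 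This is the standard computation $(H_0+E)u = \alpha Vu \Leftrightarrow \psi = \alpha Q_E\psi$, the only care being the set $\{V=0\}$. The unitary $u\mapsto (1+x)^{(d-1)/2}u$ from $\hilbert$ onto $\ltwo$ carries $H_0$ to the Bessel-type operator $-\frac{d^2}{dx^2}+\frac{(d-1)(d-3)}{4(1+x)^2}$ on $\ltwo$; since $\frac{(d-1)(d-3)}{4}=\nu^2-\frac14$ with $\nu=\frac{2-d}{2}$, this places $Q_E$ on $\ltwo$ and fixes the relevant Bessel index, while leaving the trace unchanged.

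Second, I would reduce the trace to an integral against the diagonal Green's function. Using non-negativity and cyclicity of the trace, $\tr Q_E = \tr\!\big(V(H_0+E)^{-1}\big) = \int_0^\infty V(x)\,G_E(x,x)\,(1+x)^{d-1}\,dx$, where $G_E(\cdot,\cdot)$ is the kernel of $(H_0+E)^{-1}$ with respect to the weighted measure. The whole estimate therefore hinges on good pointwise control of $G_E(x,x)$, and this is exactly where the Fourier--Bessel transformation of the appendix enters: it diagonalizes $H_0$ and produces an explicit representation of $G_E$ built from the two solutions of $(H_0+E)u=0$, namely the decaying solution expressed through $K_\nu(\sqrt{E}\,(1+x))$ and the solution $\phi$ fixed by the Neumann-induced (Robin) boundary condition at the root, $\phi'(0)=\frac{d-1}{2}\phi(0)$. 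The admixture of growing and decaying parts in $\phi$ is governed by the ratio $I_{\nu-1}(\sqrt E)/K_{\nu-1}(\sqrt E)$, which I would show stays bounded and bounded away from $0$ as $E\to 0+$.

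Third, I would extract the pointwise bound from the small- and large-argument asymptotics of $I_\nu$ and $K_\nu$, obtaining the two-regime estimate
\[
  (1+x)^{d-1}\,G_E(x,x)\ \lesssim\
  \begin{cases}
    (1+x)^{d-1}E^{-(2-d)/2}, & (1+x)\sqrt E\lesssim 1,\\[2pt]
    E^{-1/2}, & (1+x)\sqrt E\gtrsim 1,
  \end{cases}
\]
the two branches matching at $x\sim E^{-1/2}$. Inserting this into the trace integral together with $V(x)\le C(1+x)^{-\gamma}$ splits $\tr Q_E$ into a bulk integral over $[0,E^{-1/2}]$ and a tail over $[E^{-1/2},\infty)$. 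The tail always contributes $\asymp E^{(\gamma-2)/2}$ because $\gamma>1$. In the bulk the exponent $d-1-\gamma$ decides everything: for $\gamma<d$ it exceeds $-1$, the integral produces $E^{-(d-\gamma)/2}$, and multiplying by $E^{-(2-d)/2}$ again gives $E^{(\gamma-2)/2}$; for $\gamma=d$ the integrand is $(1+x)^{-1}$, so the integration yields $\log(1+E^{-1/2})\asymp|\log E|$ and hence the factor $1+|\log E|$. This reproduces both cases of the claimed bound, and finiteness of the integral shows a posteriori that $Q_E$ is trace class.

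The main obstacle I anticipate is the uniform pointwise estimate of $G_E(x,x)$ jointly in $x$ and $E$: one must track the crossover at $(1+x)\sqrt E\sim 1$, control the boundary coefficient arising from the Robin condition uniformly as $E\to0+$, and verify rigorously that the diagonal kernel is given by the Fourier--Bessel / Weyl--Titchmarsh--Kodaira representation so that the trace identity above is legitimate. By contrast, the Birman--Schwinger correspondence and the reduction of the trace to the diagonal integral are routine once $Q_E$ is known to be trace class.
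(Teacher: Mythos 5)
Your proposal is correct in substance, but it follows a genuinely different route from the paper's. Both arguments begin the same way: pass to the Bessel-type operator $H_0=-\frac{d^2}{dx^2}+\frac{(d-1)(d-3)}{4(1+x)^2}$ with the Robin condition $\varphi'(0)=\frac{d-1}{2}\varphi(0)$ (Lemma \ref{lem:equivops}), and invoke the Birman--Schwinger principle (your $V^{1/2}(H_0+E)^{-1}V^{1/2}$ and the paper's $(H_0+E)^{-1/2}V(H_0+E)^{-1/2}$ have the same nonzero spectrum and the same trace). After that, the paper works in the spectral representation: the Fourier--Bessel transform $\utrans$ of Lemma \ref{lem:fbtransform} turns $Q_E$ into the explicit factorization $L_EL_E^*$ (Lemma \ref{lem:equiv-b-s-ops}), so that $\tr Q_E$ equals the Hilbert--Schmidt norm $\iint|l_E(p,x)|^2\,dx\,dp$, which is then estimated from the small- and large-argument asymptotics of the oscillatory Bessel functions $J_\nu,Y_\nu$ (Lemma \ref{lem:l-est}). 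You instead stay in position space, writing $\tr Q_E=\int_0^\infty V(x)G_E(x,x)\,(1+x)^{d-1}dx$ and estimating the resolvent diagonal built from the modified Bessel functions $I_\nu,K_\nu$, $\nu=\tfrac{2-d}{2}$, and the Robin solution. The two computations are in fact dual: performing the $p$-integration of $|l_E(p,x)|^2$ first reproduces exactly $V(x)(1+x)^{d-1}G_E(x,x)$, so the same quantity is estimated in a different order. The paper's route buys an automatic trace identity (trace equals Hilbert--Schmidt norm squared of $L_E$) and avoids any Wronskian or boundary-coefficient analysis; your route is more classical and self-contained (no Weyl--Titchmarsh--Kodaira machinery), at the price of justifying the ``trace equals integral of the diagonal'' identity and of controlling the Robin coefficient uniformly in $E$.

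Two of your intermediate claims need repair, although neither affects the final bound. First, your bulk estimate fails at $d=2$ (i.e.\ $\nu=0$): there $I_0(z)K_0(z)\sim\log(1/z)$ as $z\to0+$, so on the region $(1+x)\sqrt E\lesssim1$ the diagonal carries an extra factor of order $1+|\log(\sqrt E(1+x))|$. Since $\int_0^1 s^{1-\gamma}(1+|\log s|)\,ds<\infty$ for $\gamma<2$, the integrated bound $E^{(\gamma-2)/2}$ survives, but the pointwise two-regime estimate as stated is false; this is precisely why the paper's Lemma \ref{lem:l-est} treats $d=2$ as a separate case. Second, the ratio $I_{\nu-1}(\sqrt E)/K_{\nu-1}(\sqrt E)$ is not ``bounded and bounded away from zero'': it decays like $E$ when $d=2$, and it grows like $e^{2\sqrt E}$ as $E\to\infty$ --- and note that the lemma demands the estimate for all $E>0$, not merely as $E\to0+$. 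What you actually need is only an upper bound on the product $\bigl(I_{\nu-1}(\sqrt E)/K_{\nu-1}(\sqrt E)\bigr)K_\nu\bigl(\sqrt E(1+x)\bigr)^2$, which does hold uniformly: since $1+x\geq1$, the bound $K_\nu\bigl(\sqrt E(1+x)\bigr)^2\lesssim e^{-2\sqrt E(1+x)}/\bigl(\sqrt E(1+x)\bigr)$ absorbs the exponential growth of the ratio. With these two corrections your plan goes through and yields the lemma.
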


\begin{lem} \label{lem:testfunc}
  Suppose that $V$ is bounded and that there is a $C > 0$ such that
  \begin{displaymath}
    V(x) \geq \frac{C}{(1+x)^\gamma}.
  \end{displaymath}
  \begin{enumerate}[{\normalfont (i)}]
  \item If $1 < \gamma < d \leq 2$, then there is a
    $D > 0$ such that
    \begin{displaymath}
      \inf \spec{H_\alpha} < -D\alpha^{\frac{2}{2-\gamma}},
      \quad \alpha \to 0+.
    \end{displaymath}
  \item If $1 < \gamma = d < 2$, then there is a $D > 0$ such that
    \begin{displaymath}
      \inf \spec{H_\alpha}
      < -D\left|\alpha \log \alpha\right|^{\frac{2}{2-\gamma}},
      \quad \alpha \to 0+.
    \end{displaymath}
  \end{enumerate}
\end{lem}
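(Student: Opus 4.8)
The plan is to prove both statements by the variational principle, exhibiting an explicit family of trial functions whose Rayleigh quotient realizes the claimed asymptotics. Since $\inf \spec{H_\alpha} \le h_\alpha[u]/\|u\|_\hilbert^2$ for every nonzero $u$ in the form domain, and since $V(x) \ge C(1+x)^{-\gamma}$ gives
\[
h_\alpha[u] \le \int_0^\infty |u'(x)|^2 (1+x)^{d-1}\,dx - \alpha C \int_0^\infty \frac{|u(x)|^2}{(1+x)^\gamma}(1+x)^{d-1}\,dx,
\]
it suffices to find, for each small $\alpha$, a test function making the right-hand side suitably negative relative to its norm. The boundedness of $V$ is needed only to guarantee that $H_\alpha$ is a well-defined self-adjoint operator; the lower bound on $V$ drives the whole estimate.

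First I would introduce a single length scale $R = R(\alpha) \to \infty$ and the trial function $u_R(x) = \chi(x/R)$, where $\chi$ is a fixed Lipschitz cutoff equal to $1$ on $[0,1]$, linear on $[1,2]$, and $0$ on $[2,\infty)$. For such $u_R$ three elementary integral estimates, valid for all large $R$, control everything: the kinetic term satisfies $\int_0^\infty |u_R'|^2 (1+x)^{d-1}\,dx \approx R^{d-2}$ (the derivative lives on $[R,2R]$, where it is of size $R^{-1}$ and the weight is of size $R^{d-1}$ over a set of length $R$); the norm satisfies $\|u_R\|_\hilbert^2 \approx R^d$; and the potential term
\[
\int_0^\infty \frac{|u_R(x)|^2}{(1+x)^\gamma}(1+x)^{d-1}\,dx = \int_0^\infty (1+x)^{d-1-\gamma}|u_R(x)|^2 \, dx
\]
is bounded below by $\int_0^R (1+x)^{d-1-\gamma}\,dx$, which is $\approx R^{d-\gamma}$ when $\gamma < d$ and $\approx \log R$ when $\gamma = d$, precisely because the exponent $d-1-\gamma$ crosses the critical value $-1$ exactly at $\gamma = d$. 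This dichotomy is the sole source of the logarithmic correction in part (ii).

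Combining these, the Rayleigh quotient is bounded above by $c_1 R^{-2} - c_2 \alpha R^{-\gamma}$ in case (i) and by $c_1 R^{-2} - c_2 \alpha R^{-\gamma}\log R$ in case (ii), for fixed positive constants $c_1,c_2$ and all large $R$. In case (i) I would set $R = \lambda\,\alpha^{-1/(2-\gamma)}$; both terms then become multiples of $\alpha^{2/(2-\gamma)}$, giving $(c_1\lambda^{-2} - c_2\lambda^{-\gamma})\alpha^{2/(2-\gamma)}$. In case (ii) I would set $R = \lambda\,(\alpha|\log\alpha|)^{-1/(2-\gamma)}$ and use $\log R \sim \tfrac{1}{2-\gamma}|\log\alpha|$ as $\alpha \to 0+$ to see that both terms are multiples of $(\alpha|\log\alpha|)^{2/(2-\gamma)}$.

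The one point requiring genuine care, and the main obstacle, is the sign of the net coefficient at the balance scale, since the kinetic and attractive contributions are of the same order there. Because $\gamma < 2$, the exponent $\gamma-2$ is negative, so $\lambda^{\gamma-2} \to 0$ as $\lambda \to \infty$; choosing the proportionality constant $\lambda$ large (but fixed, independent of $\alpha$) therefore makes the attractive term dominate, yielding $c_1\lambda^{-2} - c_2\lambda^{-\gamma} < 0$ in case (i) and the analogous strict negativity in case (ii). With such a $\lambda$ fixed one obtains $\inf\spec{H_\alpha} < -D\alpha^{2/(2-\gamma)}$ and $\inf\spec{H_\alpha} < -D|\alpha\log\alpha|^{2/(2-\gamma)}$, respectively, for all sufficiently small $\alpha > 0$. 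One must also check that the constants in the three integral estimates are uniform for $R$ large, which holds since $R(\alpha) \to \infty$ as $\alpha \to 0+$, and that the upper bound $\|u_R\|_\hilbert^2 \approx R^d$ is used when dividing the (negative) numerator by the norm.
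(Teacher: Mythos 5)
Your proposal is correct and follows essentially the same route as the paper: a variational argument with explicit trial functions at the length scale $\alpha^{-1/(2-\gamma)}$ (respectively $(\alpha|\log\alpha|)^{-1/(2-\gamma)}$ in the critical case $\gamma=d$), with a large proportionality constant chosen to make the net coefficient negative, exactly as the paper does with its small $K$ in case (i) and large $\beta$ in case (ii). The only difference is cosmetic: the paper uses $e^{-\delta x}$ in case (i) and a tent function in case (ii), whereas you use a single trapezoidal cutoff family for both, which is a harmless (and slightly tidier) unification.
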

\begin{proof} \text{ }
  \begin{enumerate}[{\normalfont (i)}]
  \item For the first case, assume that $1 < \gamma < d \leq 2$.
    Choose $\alpha > 0$ and $\delta$ with
    $0 < \delta < 1$.
    Consider the function
    \begin{displaymath}
      u_\delta(x) = e^{-\delta x}.
    \end{displaymath}
    Clearly $u_\delta \in \qdom{h_\alpha}$ and
    \begin{equation} \label{eq:halphaest}
      \begin{split}
        h_\alpha\left[u_\delta\right]
        & \leq
        \int_0^\infty \left(
          \left|u_\delta'(x)\right|^2(1+x)^{d-1}
          - C \alpha \left|u_\delta(x)\right|^2(1+x)^{d-\gamma-1}
        \right) \, dx \\
        & =
        \delta^{2-d}e^{2\delta}\int_\delta^\infty e^{-2x}x^{d-1} \, dx
        - \alpha \delta^{\gamma-d}e^{2\delta}
        C \int_\delta^\infty e^{-2x} x^{d-\gamma-1} \, dx \\
        & \leq \delta^{-d}e^{2\delta} \left(
          \delta^{2}\int_0^\infty e^{-2x}(1+x) \, dx
          - \alpha \delta^{\gamma} C \int_1^\infty \frac{e^{-2x}}{x} \,
          dx
        \right).
      \end{split}
    \end{equation}
    Furthermore,
    \begin{equation} \label{eq:udeltanormest}
      \begin{split}
        \left\|u_\delta\right\|_{\hilbert}^2
        & = \delta^{-d}e^{2\delta}\int_\delta^\infty e^{-2x}x^{d-1}\,dx \\
        & \leq \delta^{-d}e^{2\delta}\int_0^\infty e^{-2x}x^{d-1}\,dx \\
        & = \delta^{-d}e^{2\delta}2^{-d}\, \Gamma(d).
      \end{split}
    \end{equation}
    Now choose a constant $K > 0$ such that
    \begin{displaymath}
      \widetilde{K}_\gamma \coloneq K^\gamma C \int_1^\infty
      \frac{e^{-2x}}{x}\,dx
      - K^2\int_0^\infty e^{-2x}(1+x)\,dx > 0.
    \end{displaymath}
    Assume that $\alpha > 0$ is small enough to ensure that
    \begin{displaymath}
      \delta \coloneq K\alpha^{\frac{1}{2-\gamma}} < 1.
    \end{displaymath}
    Then we have that
    \begin{displaymath}
      \alpha \delta^\gamma = K^\gamma \alpha^{\frac{2}{2-\gamma}}
    \end{displaymath}
    and that
    \begin{displaymath}
      \delta^2 = K^2 \alpha^{\frac{2}{2-\gamma}}.
    \end{displaymath}
    Therefore, by \eqref{eq:halphaest},
    \begin{equation} \label{eq:halphaest2}
      h_\alpha[u_\delta] \leq - \delta^{-\delta}e^{2\delta}
      \widetilde{K}_\gamma
      \alpha^{\frac{2}{2-\gamma}}.
    \end{equation}
    In particular $h_\alpha[u_\delta] < 0$ and by combining
    \eqref{eq:udeltanormest} with \eqref{eq:halphaest2} we obtain
    \begin{displaymath}
      \inf \spec{H_\alpha} 
      \leq \frac{h_\alpha[u_\delta]}{\|u_\delta\|_\hilbert^2}
      \leq - \frac{2^d \widetilde{K}_\gamma}{\Gamma(d)}
      \alpha^{\frac{2}{2-\gamma}}.
    \end{displaymath}

    \item To prove the second statement, let $1 < \gamma = d < 2$.
      Let $\beta > 2$ be large enough to guarantee that
      \begin{displaymath}
        M \coloneq \frac{C}{8(2-d)}\log \frac{\beta}{2}
        - \frac{2^d}{d}\beta^{d-2} > 0
      \end{displaymath}
      Choose any $\alpha$ with $0 < \alpha < 1$ small enough to satisfiy
      \begin{displaymath}
        \nu \coloneq
        \frac{1}{\left| \alpha \log \alpha \right|^{\frac{1}{2-d}}}
        \geq e.
      \end{displaymath}
      Note that $x \log x \geq -1/2$ for any
      $x > 0$. Hence
      \begin{displaymath}
        y - \log y = \left(1+y^{-1}\log y^{-1}\right)y \geq \frac{1}{2}y
      \end{displaymath}
      for any $y > 0$ and therefore, since $0 < \alpha < 1$,
      \begin{equation} \label{eq:alogn}
        \alpha \log \nu
        = \frac{1}{2-d}\alpha\left(|\log \alpha| - \log |\log \alpha|\right)
        \geq \frac{1}{2(2 - d)}\left|\alpha \log \alpha\right|.
      \end{equation}
      Let
      \begin{displaymath}
        \mu = \beta \nu.
      \end{displaymath}
      Note that $\mu > 2e$ and define
      \begin{displaymath}
        w(x) = \left\{
          \begin{array}{ll}
            1 - \frac{x}{\mu}, \quad & 0 < x < \mu, \\
            0, & x \geq \mu.
          \end{array}
        \right.
      \end{displaymath}
      Clearly $w \in \qdom{h_\alpha}$ and since $\mu \geq 1$,
      \begin{equation} \label{eq:kinetic}
        \begin{split}
          \int_0^\infty\, \left|w'(x)\right|^2 (1+x)^{d-1} \, dx
          & = \frac{1}{d\mu^2}\left((1+\mu)^d - 1\right) \\
          & \leq \frac{1}{d\mu^2}(1+\mu)^d \\
          & \leq \frac{2^d}{d} \mu^{d-2}.
        \end{split}
      \end{equation}
      On the other hand,
      using the assumption on $V$ we get a lower bound on the potential
      energy by
      \begin{equation} \label{eq:potential}
        \begin{split}
          \alpha \int_0^\infty |w(x)|^2 V(x) (1+x)^{d-1} \, dx
          &\geq C\alpha \int_0^\mu
          \frac{\left|1-\frac x \mu\right|^2}{1+x} \, dx \\
          &\geq C\alpha \int_0^{\mu/2}
          \frac{\left|1-\frac x \mu\right|^2}{1+x} \, dx \\
          & \geq \frac{C\alpha}{4} \int_0^{\mu/2}
          \frac{1}{1+x} \, dx \\
          & = \frac{C\alpha}{4} \log \left(1 + \frac \mu 2\right) \\
          & \geq \frac{C\alpha}{4} \log \frac \mu 2.
        \end{split}
      \end{equation}
      Using \eqref{eq:kinetic}, \eqref{eq:potential} and
      \eqref{eq:alogn} we get that
      \begin{displaymath}
        \begin{split}
          h_\alpha[w]
          & \leq \frac{2^d}{d}\mu^{d-2} - \frac{C\alpha}{4} 
          \log \frac \mu 2 \\
          & = \frac{2^d}{d}\beta^{d-2}\nu^{d-2}
          - \frac{C}{4}\log\frac \beta 2 \cdot \alpha \log \nu \\
          & \leq |\alpha \log \alpha|
          \left(
            \frac{2^d}{d}\beta^{d-2} - \frac{C}{4} \log \frac \beta 2
            \cdot \frac{1}{2(2-d)}
          \right) \\
          & = - M |\alpha \log \alpha|.
        \end{split}
      \end{displaymath}
      Finally, since $|w| \leq 1$,
      \begin{displaymath}
        \begin{split}
          \|w\|^2_\hilbert
          & \leq \int_0^\mu (1+x)^{d-1} \, dx  \leq \frac{2^d}{2}\mu^d \\
          & = \frac{2^d}{d} \beta^d |\alpha \log \alpha|^{-\frac{d}{2-d}}.
        \end{split}
      \end{displaymath}
      It thus follows that
      \begin{displaymath}
        \inf \spec {H_\alpha} 
        \leq \frac{h_\alpha[w]}{\|w\|_\hilbert^2}
        \leq - \frac{dM}{(2\beta)^d}
        |\alpha \log \alpha|^{\frac{2}{2-d}}. \qedhere
      \end{displaymath}
    \end{enumerate}
  \end{proof}

\begin{proof}[Proof of Theorem \ref{thm:estimates}]
  Start by assuming that $1 < \gamma \leq d \leq 2$, $\gamma \neq 2$.
  Let
  \begin{displaymath}
    E(\alpha) = -\inf \spec{H_\alpha}
  \end{displaymath}
  for every $\alpha > 0$. By Lemma
  \ref{lem:testfunc}, $E(\alpha) > 0$. Since the negative
  spectrum of $H_\alpha$ is discrete, $-E(\alpha)$ is an eigenvalue of
  $H_\alpha$. By Lemma \ref{lem:trineq} there is a $D = D(\gamma,d) >
  0$ such that for any $\alpha > 0$, there is a non-negative
  trace-class operator $Q_{E(\alpha)}$ whose trace is estimated by
  \begin{displaymath}
    \tr Q_{E(\alpha)} \leq \left\{
      \begin{array}{ll}
        D\cdot\left(E(\alpha)\right)^{\frac{\gamma-2}{2}},
        & \quad \gamma < d, \\
        D\cdot\left(E(\alpha)\right)^{\frac{\gamma-2}{2}}
        \left(1+\left|\log E(\alpha)\right|\right),
        & \quad \gamma = d,
      \end{array}
    \right.
  \end{displaymath}
  and such that $\alpha^{-1}$ is an eigenvalue of
  $Q_{E(\alpha)}$. Since $Q_{E(\alpha)}$ is non-negative, we get that
  $\alpha^{-1} \leq \tr Q_{E(\alpha)}$ and therefore
  \begin{equation} \label{eq:ev-est}
    \frac 1 \alpha \leq
    \left\{
      \begin{array}{ll}
        D\cdot\left(E(\alpha)\right)^{\frac{\gamma-2}{2}},
        & \quad \gamma < d, \\
        D\cdot\left(E(\alpha)\right)^{\frac{\gamma-2}{2}}
        \left(1+\left|\log E(\alpha)\right|\right),
        & \quad \gamma = d.
      \end{array}
    \right.
  \end{equation}

  \begin{enumerate}[{\normalfont (i)}]
  \item
    Now consider the first case, where $1 < \gamma < d \leq 2$. Choose
    $\alpha > 0$. By \eqref{eq:ev-est},
    $D\alpha \geq \left(E(\alpha)\right)^{(2-\gamma)/2}$.
    From the fact that $2 - \gamma > 0$ it follows that
    \begin{displaymath}
      E(\alpha) \leq (D\alpha)^{\frac{2}{2-\gamma}}
    \end{displaymath}
    for any $\alpha > 0$.
    To complete the proof of the first statement it now remains
    to apply Lemma \ref{lem:testfunc}.

  \item
    Let us turn to the second case, where $1 < \gamma = d < 2$.
    Introduce the function $W \in \cinf$, defined as the
    inverse of the function $y \mapsto ye^y, y > 0$. The function $W$ is
    sometimes called the Lambert W-function.
    Since $W$ is increasing, we get that if $y,z > 0$ are such that
    $z \leq ye^y$, then
    \begin{equation} \label{eq:lambertineq}
      y = W(ye^y) \geq W(z).
    \end{equation}

    For simplicity, let $\alpha_0 > 0$ be such that $E(\alpha) < 1/2$
    for any $\alpha$ with
    $0 < \alpha < \alpha_0$. Then, by \eqref{eq:ev-est} there is a
    constant $\tilde D = \tilde D(\gamma)$ such that
    \begin{equation} \label{eq:alphaest-case2}
      \frac{1}{\tilde D \alpha}
      \leq -\left(E(\alpha)\right)^{\frac{\gamma-2}{2}}\log E(\alpha)
    \end{equation}
    for any $\alpha$ with $0 < \alpha < \alpha_0$.
    Let $y(\alpha) = \log
    \left(\left(E(\alpha)\right)^{\frac{\gamma-2}{2}}\right)$.
    Then by \eqref{eq:alphaest-case2},
    \begin{displaymath}
      \frac{1}{\tilde D \alpha}
      \leq \frac{2}{2-\gamma} y(\alpha)e^{y(\alpha)}
    \end{displaymath}
    whenever $0 < \alpha < \alpha_0$.
    By \eqref{eq:lambertineq},
    \begin{displaymath}
      y(\alpha) \geq W\left(\frac{2-\gamma}{2\tilde D \alpha}\right)
    \end{displaymath}
    and therefore, since $W(x)e^{W(x)} = x$ for any $x > 0$,
    \begin{equation} \label{eq:Eest-lambert}
      \begin{split}
        E(\alpha)
        & \leq \left(
          \exp\left(
            -W\left(\frac{2-\gamma}{2\tilde D \alpha} \right)
          \right)
        \right)^{\frac{2}{2-\gamma}} \\
        & = \left(
          \frac{
            W\left(\frac{2-\gamma}{2\tilde D \alpha}\right)
          }{
            W\left(\frac{2-\gamma}{2\tilde D \alpha}\right)
            \exp\left(
              W\left(\frac{2-\gamma}{2\tilde D \alpha}\right)
            \right)
          }
        \right)^{\frac{2}{2-\gamma}} \\
        & = \left(
          \frac{2\tilde D}{2-\gamma}\alpha
          W\left(\frac{2-\gamma}{2\tilde D \alpha}\right)
        \right)^{\frac{2}{2-\gamma}}
      \end{split}
    \end{equation}
    for such $\alpha$. Now, note that for any $x > 0$,
    $W(1/x)\exp(W(1/x)) = 1/x$. Therefore
    $\log(1/x) = \log W(1/x) + W(1/x)$ and since $W(1/x) \to \infty$
    as $x \to 0+$ we get that
    \begin{displaymath}
      \log \frac 1 x \sim W\left(\frac 1 x\right),
      \quad
      x \to 0+.
    \end{displaymath}
    In particular, letting $x = 2\tilde D \alpha/(2-\gamma)$, we get that
    \begin{displaymath}
      W\left(\frac{2-\gamma}{2\tilde D \alpha}\right)
      \leq K \log \frac{2-\gamma}{2\tilde D \alpha},
      \quad \alpha \to 0+,
    \end{displaymath}
    for any fixed $K > 1$.
    Together with \eqref{eq:Eest-lambert} this gives
    \begin{displaymath}
      E(\alpha) \leq
      \left(
        \frac{2K\tilde D}{2-\gamma}\alpha
        \left(
          \left|\log \alpha\right|
          +
          \log\left(\frac{2-\gamma}{2\tilde D}\right)
        \right)
      \right)^{\frac{2}{2-\gamma}},
      \quad
      \alpha \to 0+.
    \end{displaymath}
    Applying Lemma \ref{lem:testfunc} completes the proof of the
    second case. \qedhere
  \end{enumerate}
\end{proof}

\appendix
%%%%%%%%%%%%%%%%%%%%%%%%%%%%%%%%%%%%%%%%%%%%%%%%%%%%%%%%%%%%%%%%%%%%%%%%%%%%%%%%
\section{Appendix: The Fourier-Bessel Transform}
%%%%%%%%%%%%%%%%%%%%%%%%%%%%%%%%%%%%%%%%%%%%%%%%%%%%%%%%%%%%%%%%%%%%%%%%%%%%%%%%
\label{sec:fourierbessel}

%-------------------------------------------------------------------------------
\subsection{Properties of the Bessel functions}
%-------------------------------------------------------------------------------
Let $\nu$ be a real number. Denote by $J_\nu$ and
$Y_\nu$ the Bessel functions of the first and second type,
respectively. Also, let
\begin{displaymath}
  H_\nu^{(1)} = J_\nu + i Y_\nu
  \quad \textrm{ and } \quad
  H_\nu^{(2)} = J_\nu - i Y_\nu
\end{displaymath}
denote the Hankel functions. We will use the following properties of
these functions, as listed in Chapter 9 of \cite{stegun-abramowitz}:
\begin{prop} \label{prop:besselprop}
  Let $\nu \in \reals$.
  The functions $J_\nu$, $Y_\nu$, $H^{(1)}_\nu$ and $H^{(2)}_\nu$ are
  analytic in $\complex \setminus (-\infty,0]$ and satisfy the
  following:
  \begin{enumerate}[{\normalfont (i)}]
  \item The functions $J_\nu$ and $Y_\nu$ are linearly independent.

  \item
    Let $C_\nu$ denote $J_\nu$, $Y_\nu$, $H^{(1)}_\nu$,
    $H^{(2)}_\nu$ or any linear combination of these functions. Then
    we have that
    \begin{displaymath}
      C_\nu'(z) = C_{\nu-1}(z) - \frac{\nu}{z}C_\nu(z)
      \quad \textrm{ and } \quad
      C_\nu'(z) = -C_{\nu+1}(z) + \frac{\nu}{z}C_\nu(z).
    \end{displaymath}

  \item If $\nu > 0$ and $z \to 0$, then
    \begin{displaymath}
      J_\nu(z) \textrm{ is bounded}
      \quad \textrm{ and } \quad
      Y_\nu(z) \sim -\frac 1 \pi \Gamma(\nu)\left(\frac z 2\right)^{-\nu}.
    \end{displaymath}

  \item If $\nu = 0$ and $z \to 0$, then
    \begin{displaymath}
      J_\nu(z) \textrm{ is bounded}
      \quad \textrm{ and } \quad
      Y_\nu(z) \sim -\frac 2 \pi \log z.
    \end{displaymath}

  \item If $x$ is real and $x \to \infty$, then
    \begin{displaymath}
      J_\nu(x) \sim \sqrt{\frac{2}{\pi x}}
      \cos\left(x \!-\! \frac {\nu \pi}{2} \!-\! \frac{\pi}{4}\right)
      \quad \textrm{ and } \quad
      Y_\nu(x) \sim \sqrt{\frac{2}{\pi x}}
      \sin\left(x \!-\! \frac {\nu \pi}{2} \!-\! \frac{\pi}{4}\right).
    \end{displaymath}

  \item If $z \in \complex \setminus (-\infty,0]$ and $z \to \infty$, then
    \begin{displaymath}
      H_\nu^{(1)}(z) \sim \sqrt{\frac{2}{\pi z}} e^{i(z - \nu\pi/2-\pi/4)}
      \quad \textrm{ and } \quad
      H_\nu^{(2)}(z) \sim \sqrt{\frac{2}{\pi z}} e^{-i(z - \nu\pi/2-\pi/4)}.
    \end{displaymath}

  \item We have that $\overline{J_\nu(z)} = J_\nu(\bar z)$ and
    $\overline{Y_\nu(z)} = Y_\nu(\bar z)$.

  \item The following Wronskian formula holds:
    \begin{displaymath}
      J_{\nu + 1}(z)Y_\nu(z) - J_\nu(z)Y_{\nu+1}(z) = 2/(\pi z).
    \end{displaymath}
  \end{enumerate}
\end{prop}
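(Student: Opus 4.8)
The statement collects classical facts about cylinder functions, and the phrasing (``as listed in Chapter~9 of \cite{stegun-abramowitz}'') signals that the intended justification is a reference rather than a self-contained argument. Still, here is how I would establish each item from first principles. I would start from the defining power series
\[
  J_\nu(z) = \sum_{m=0}^\infty \frac{(-1)^m}{m!\,\Gamma(m+\nu+1)}\left(\frac z2\right)^{2m+\nu},
\]
the standard definition $Y_\nu = (J_\nu\cos\nu\pi - J_{-\nu})/\sin\nu\pi$ for non-integer $\nu$ (with the usual limit for integer $\nu$), and $H^{(1,2)}_\nu = J_\nu \pm iY_\nu$. Analyticity on $\complex\setminus(-\infty,0]$ is then inherited from the principal branch of $z^\nu$ appearing in the series, which converges locally uniformly off the cut.

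The elementary items fall out of the series directly. Differentiating termwise yields the two contiguity relations $C_{\nu-1}+C_{\nu+1} = (2\nu/z)C_\nu$ and $C_{\nu-1}-C_{\nu+1} = 2C_\nu'$ for $C_\nu \in \{J_\nu,Y_\nu\}$ and their linear combinations; adding and subtracting these produces the two derivative formulas in~(ii). Property~(vii) is immediate, since the coefficients of the series are real for real $\nu$ and $Y_\nu$ is a real-coefficient combination of $J_{\pm\nu}$. For the small-argument behavior~(iii)--(iv) I would read off the leading term $(z/2)^\nu/\Gamma(\nu+1)$ of $J_\nu$, which is bounded (indeed $\to 0$) for $\nu>0$ and tends to $1$ for $\nu=0$; for $Y_\nu$ with $\nu>0$ the dominant contribution is $-J_{-\nu}/\sin\nu\pi \sim -(z/2)^{-\nu}/(\Gamma(1-\nu)\sin\nu\pi)$, and the reflection formula $\Gamma(\nu)\Gamma(1-\nu)=\pi/\sin\nu\pi$ rewrites the constant as $-\Gamma(\nu)/\pi$, while the $\nu=0$ logarithmic singularity is extracted from the limiting definition of $Y_0$.

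For the Wronskian facts~(i) and~(viii) I would invoke Abel's identity: any Wronskian of two solutions of Bessel's equation $y''+z^{-1}y'+(1-\nu^2/z^2)y=0$ has the form $C/z$, and substituting the asymptotics from~(iii)--(iv) into $W[J_\nu,Y_\nu]=J_\nu Y_\nu'-J_\nu'Y_\nu$ pins down $C=2/\pi$. This gives $W[J_\nu,Y_\nu]=2/(\pi z)\neq 0$, which forces the linear independence in~(i). The cross-product formula~(viii) is then purely algebraic: inserting $J_\nu'=-J_{\nu+1}+(\nu/z)J_\nu$ and $Y_\nu'=-Y_{\nu+1}+(\nu/z)Y_\nu$ from~(ii) into $W[J_\nu,Y_\nu]$ collapses the $(\nu/z)J_\nu Y_\nu$ terms and leaves exactly $J_{\nu+1}Y_\nu-J_\nu Y_{\nu+1}$, which must therefore equal $2/(\pi z)$.

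The genuine work, and the only real obstacle I anticipate, lies in the large-argument asymptotics~(v)--(vi). The power series is worthless in this regime, so I would instead begin from a contour (Sommerfeld/Hankel) integral representation of $H^{(1)}_\nu$ and $H^{(2)}_\nu$ and apply the method of steepest descent; the relevant saddle delivers the leading factor $\sqrt{2/(\pi z)}\,e^{\pm i(z-\nu\pi/2-\pi/4)}$ and hence~(vi) as $z\to\infty$ in the cut plane. Property~(v) for real $x$ then follows by taking real and imaginary parts via $J_\nu=\tfrac12(H^{(1)}_\nu+H^{(2)}_\nu)$ and $Y_\nu=\tfrac1{2i}(H^{(1)}_\nu-H^{(2)}_\nu)$. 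Everything else reduces to routine manipulation of the defining series; the saddle-point analysis (equivalently, Watson's treatment of the irregular singular point at infinity) is the single step requiring nontrivial analytic machinery.
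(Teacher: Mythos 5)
The paper offers no proof here at all: the proposition is a list of classical facts whose entire justification is the citation to Chapter 9 of \cite{stegun-abramowitz}. You recognized this, and your first-principles outline is therefore a genuinely different, more self-contained route --- and it is sound. The power series together with the quotient definition of $Y_\nu$ give analyticity off the cut, the recurrences (ii), the conjugation property (vii), and the small-argument behavior (iii)--(iv) (via the reflection formula $\Gamma(\nu)\Gamma(1-\nu)=\pi/\sin\nu\pi$); Abel's identity gives $W[J_\nu,Y_\nu]=C/z$ with $C=2/\pi$ fixed by the small-$z$ behavior, whence (i); and substituting the derivative relations of (ii) into the Wronskian collapses it exactly to the cross-product (viii). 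Passing from (vi) to (v) via $J_\nu=\frac{1}{2}(H^{(1)}_\nu+H^{(2)}_\nu)$, $Y_\nu=\frac{1}{2i}(H^{(1)}_\nu-H^{(2)}_\nu)$ is also correct for real $x$ and real $\nu$, and it supplies the proper reading of the ``$\sim$'' in (v) (error $O(x^{-3/2})$; literal asymptotic equivalence fails at the zeros of the cosine). What the paper's citation buys is economy, which is appropriate for textbook material; what your route buys is verifiability --- indeed, carrying out your derivation of (iv) from the limiting definition of $Y_0$ gives $Y_0(z)\sim\frac{2}{\pi}\log z\to-\infty$ as $z\to 0+$, revealing that the sign printed in the proposition is off. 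Two caveats keep your text a sketch rather than a proof: the steepest-descent analysis behind (vi), which you rightly flag as the only hard step, is invoked but not performed (one must also check validity in the full sector $|\arg z|<\pi$), and integer orders ($\nu=0$ in (iv), integer $\nu$ in (iii)) require the limiting definition of $Y_\nu$ rather than the quotient formula. Neither caveat is conceptual; both are standard.
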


%-------------------------------------------------------------------------------
\subsection{The Weyl-Titchmarsch-Kodaira Theorem}
%-------------------------------------------------------------------------------
Let $\tau$ be a formally self-adjoint formal
differential operator of order $n$ on the interval $(0,\infty)$, and
let $T$ be a self-adjoint realization of $\tau$ in $\lspace 2
{0,\infty}$. We will assume that $T \geq 0$. For such operators it is
sometimes possible to give an explicit description of the spectral measure,
using the following technique from Chapter XIII of \cite{dunford-schwartz}:

\begin{prop} \label{prop:wtk}
  Let $U$ be a fixed open neighborhood of $(0,\infty)$
  and let the functions
  $\sigma_1(\cdot,\lambda),\ldots,\sigma_n(\cdot,\lambda)$
  be continuous on $(0,\infty) \times U$, analytically
  dependent on $\lambda$ for $\lambda \in U$, and
  form a basis for the solutions of the equation
  \begin{displaymath}
    \tau\sigma = \lambda\sigma,
    \quad \lambda \in U.
  \end{displaymath}
  Suppose that for $\lambda \in U \setminus [0,\infty)$, the resolvent
  $(T-\lambda)^{-1}$ is an integral operator with kernel $K_\lambda$
  that satisfies
  \begin{displaymath}
    K_\lambda(x,y)
    = \theta(\lambda)\sigma_1(x,\lambda)
    \overline{\sigma_1(y,\overline \lambda)}
    + \sum_{i,j=1}^n a_{ij}\sigma_i(x,\lambda)
    \overline{\sigma_j(y,\overline \lambda)}
    \quad \textrm{ if } y < x,
  \end{displaymath}
  for some function $\theta$ and complex numbers $a_{ij}$.
  Then $\theta$ is analytic in $U \setminus [0,\infty)$ and
  \begin{displaymath}
    \rho(a,b) = \lim_{\delta \to 0} \lim_{\epsilon \to 0+}
    \frac{1}{2\pi i} \int_{a+\delta}^{b-\delta}
    \left(\theta(x + i\epsilon) - \theta(x - i\epsilon)\right) \,
    dx,
    \quad 0 < a < b,
  \end{displaymath}
  defines a positive Borel measure $\rho$ on
  $(0,\infty)$ that satisfies:

  \begin{enumerate}[{\normalfont (i)}]
  \item There is an isometric
    isomorphism $\vtrans$ from $\lspace 2 {0,\infty}$
    onto $\lspace 2 {(0,\infty),\rho}$, given by
    \begin{displaymath}
      (\vtrans \varphi)(y)
      = \int_0^\infty \varphi(x)\overline{\sigma_1(x,y)} \, dx,
      \quad
      \supp \varphi \Subset (0,\infty).
    \end{displaymath}

    \item The inverse of $\vtrans$ is given by
      \begin{displaymath}
        (\vtrans^{-1} f)(x)
        = \int_0^\infty f(y)\sigma_1(x,y)\,d\rho(y),
        \quad \supp f \Subset (0,\infty).
      \end{displaymath}

    \item We have that
      \begin{displaymath}
        \vtrans \dom T = \left\{
          f(y) \st yf(y) \in \lspace 2 {(0,\infty),\rho}
        \right\}
      \end{displaymath}
      and
      \begin{displaymath}
        (\vtrans T \vtrans^{-1} f)(y) = yf(y),
        \quad
        f \in \vtrans \dom T.
      \end{displaymath}
  \end{enumerate}

\end{prop}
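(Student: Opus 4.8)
The proof follows the Weyl-Titchmarsh-Kodaira theory as developed in Chapter XIII of \cite{dunford-schwartz}; I sketch the adaptation to the present hypotheses. The starting point is Stone's formula, which recovers the spectral family $\{E_\lambda\}$ of $T$ from its resolvent $R(\lambda) = (T-\lambda)^{-1}$: for $0 < a < b$ and $\varphi,\psi \in \lspace 2 {0,\infty}$ with compact support in $(0,\infty)$,
\begin{displaymath}
  \langle (E_b - E_a)\varphi, \psi\rangle
  = \lim_{\delta \to 0}\lim_{\epsilon \to 0+}
  \frac{1}{2\pi i}\int_{a+\delta}^{b-\delta}
  \langle (R(x+i\epsilon) - R(x-i\epsilon))\varphi, \psi\rangle \, dx.
\end{displaymath}
Since $R(\lambda)$ is an integral operator with kernel $K_\lambda$, I would substitute the assumed form of $K_\lambda$ into this identity. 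The key observation is that the finite sum $\sum_{i,j} a_{ij}\sigma_i(x,\lambda)\overline{\sigma_j(y,\overline\lambda)}$ extends analytically in $\lambda$ across $(0,\infty)$ --- because each $\sigma_i(\cdot,\lambda)$ is analytic on $U$ and $\lambda \mapsto \overline{\sigma_j(y,\overline\lambda)}$ is likewise analytic --- so it carries no boundary jump and contributes nothing to the inner limit. Hence only the term carrying $\theta(\lambda)$ survives, and the analyticity of $\theta$ on $U \setminus [0,\infty)$ is inherited from that of $R(\lambda)$ on the resolvent set.

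Carrying out the substitution and using the symmetry $K_\lambda(x,y) = \overline{K_{\overline\lambda}(y,x)}$ and the reflection property $\theta(\overline\lambda) = \overline{\theta(\lambda)}$, both forced by the self-adjointness of $T$, I expect the right-hand side to collapse to a Parseval-type identity
\begin{displaymath}
  \langle (E_b - E_a)\varphi, \psi\rangle
  = \int_a^b (\vtrans\varphi)(y)\,\overline{(\vtrans\psi)(y)}\, d\rho(y),
  \qquad
  (\vtrans\varphi)(y) = \int_0^\infty \varphi(x)\overline{\sigma_1(x,y)}\,dx,
\end{displaymath}
where $\rho$ is precisely the measure obtained from the jump of $\theta$ as stated. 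Taking $\varphi = \psi$ makes the left-hand side non-negative for every such $\varphi$, which is exactly what is needed to conclude that $\rho$ is a positive Borel measure on $(0,\infty)$.

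With the Parseval identity in hand, properties (i)--(iii) follow by standard functional-analytic arguments. Letting $a \to 0+$ and $b \to \infty$ and using $T \geq 0$ (so that the spectral family satisfies $E_0 = 0$ and $E_\infty = I$) yields $\|\varphi\|^2 = \|\vtrans\varphi\|_{\lspace 2 {(0,\infty),\rho}}^2$, first for compactly supported $\varphi$ and then, by density, for all $\varphi \in \lspace 2 {0,\infty}$; this gives the isometry in (i). Inserting $T$ into the identity and using that $\sigma_1(\cdot,y)$ solves $\tau\sigma = y\sigma$ shows that $\vtrans T \vtrans^{-1}$ acts as multiplication by $y$, which is (iii); the description of $\vtrans\dom T$ then follows from the spectral theorem for the multiplication operator. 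The inversion formula in (ii) is obtained by verifying that the proposed operator $f \mapsto \int_0^\infty f(y)\sigma_1(\cdot,y)\,d\rho(y)$ is a right inverse of $\vtrans$ on functions of compact support, which in turn forces $\vtrans$ to be onto.

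The step I expect to be the main obstacle is establishing that $\vtrans$ is surjective --- equivalently, the completeness of the generalized eigenfunction expansion --- rather than merely isometric. Isometry alone only places $\vtrans\,\lspace 2 {0,\infty}$ inside a closed subspace of $\lspace 2 {(0,\infty),\rho}$, and ruling out a nontrivial orthogonal complement requires knowing that the single function $\sigma_1$ already exhausts the spectral multiplicity of $T$; this is exactly where the specific form $\theta(\lambda)\sigma_1(x,\lambda)\overline{\sigma_1(y,\overline\lambda)}$ of the singular part of $K_\lambda$ is essential. A secondary technical difficulty is the rigorous interchange of the limits $\delta \to 0$, $\epsilon \to 0+$ with the spatial integrations when passing to the boundary values of $\theta$, which is controlled by the compact-support restriction on $\varphi,\psi$ together with dominated convergence, exactly as in \cite{dunford-schwartz}.
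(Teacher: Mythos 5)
The paper gives no proof of this proposition: it is quoted as a known result from Chapter XIII of \cite{dunford-schwartz} (the Weyl--Titchmarsh--Kodaira theory), so there is no internal argument to compare yours against. Your sketch follows exactly that reference's standard route --- Stone's formula, the vanishing jump across $(0,\infty)$ of the analytic part of the kernel so that only the $\theta$-term contributes, the resulting Parseval identity and positivity of $\rho$, and the completeness/surjectivity of $\vtrans$ as the substantive point --- and is consistent with the source the paper relies on.
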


%-------------------------------------------------------------------------------
\subsection{An Auxiliary Schr\"odinger Operator on the Half-Line}
%-------------------------------------------------------------------------------
\label{sec:auxop}

Let $1 < d \leq 2$ and consider the self-adjoint operator
\begin{displaymath}
  H_0 = -\frac{d^2}{dx^2} + \frac{(d-1)(d-3)}{4(1+x)^2}
\end{displaymath}
in the space $\lspace 2 {0,\infty}$ defined by the introduction of the
boundary condition
\begin{displaymath}
  \varphi'(0) = \frac{d-1}{2} \varphi(0).
\end{displaymath}

\begin{lem} \label{lem:equivops}
  Let the space $\hilbert$, the operator $H_\alpha$ and the potential
  $V$ be as in Section \ref{sec:bottomspectrum}.
  Then $H_\alpha$ is unitarily equivalent to $H_0 - \alpha V$ for any
  $\alpha > 0$.
\end{lem}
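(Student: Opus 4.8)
The plan is to exhibit the unitary equivalence explicitly through the ground-state (Liouville) substitution that removes the weight $(1+x)^{d-1}$. I would define $\utrans : \hilbert \to \ltwo$ by
\[
  (\utrans u)(x) = (1+x)^{\frac{d-1}{2}}\, u(x),
\]
which is manifestly unitary, since $\|\utrans u\|_{\ltwo}^2 = \int_0^\infty (1+x)^{d-1}|u(x)|^2\,dx = \|u\|_\hilbert^2$, with inverse given by multiplication by $(1+x)^{-(d-1)/2}$. First I would check that $\utrans$ carries the form domain $\qdom{h_\alpha} = \{u \in \hilbert : u' \in \hilbert\}$ onto $\hone$: writing $m = (d-1)/2$ and $\varphi = \utrans u$, one has $\varphi' = (1+x)^m u' + m(1+x)^{m-1}u$, and since $(1+x)^{d-3} \le (1+x)^{d-1}$ for $x>0$, both summands lie in $\ltwo$ precisely when $u,u' \in \hilbert$; the reverse inclusion is analogous.

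The core of the argument is the transformation of the quadratic form. Setting $u = \utrans^{-1}\varphi$, so that $(1+x)^m u' = \varphi' - m(1+x)^{-1}\varphi$, I would expand the kinetic term, use $2\realpart(\overline{\varphi'}\varphi) = (|\varphi|^2)'$, and integrate the resulting cross term by parts. The boundary term at infinity vanishes for $\varphi \in \hone$, while the one at $x=0$ contributes $+m|\varphi(0)|^2$; the remaining bulk terms combine into the coefficient $m^2 - m = \frac{(d-1)(d-3)}{4}$ in front of $\int_0^\infty (1+x)^{-2}|\varphi|^2\,dx$. Since $|u(x)|^2(1+x)^{d-1} = |\varphi(x)|^2$, the potential term becomes simply $-\alpha\int_0^\infty V|\varphi|^2\,dx$, and altogether
\[
  h_\alpha[\utrans^{-1}\varphi]
  = \int_0^\infty |\varphi'(x)|^2\,dx
  + \frac{(d-1)(d-3)}{4}\int_0^\infty \frac{|\varphi(x)|^2}{(1+x)^2}\,dx
  + \frac{d-1}{2}|\varphi(0)|^2
  - \alpha\int_0^\infty V(x)|\varphi(x)|^2\,dx.
\]

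Finally I would identify this with the closed form of $H_0 - \alpha V$. Integrating $-\varphi''$ against $\overline\varphi$ by parts gives $\int_0^\infty |\varphi'|^2\,dx + \varphi'(0)\overline{\varphi(0)}$, and the Robin condition $\varphi'(0) = \frac{d-1}{2}\varphi(0)$ is exactly what converts the boundary term into $\frac{d-1}{2}|\varphi(0)|^2$; the bounded multiplier $\frac{(d-1)(d-3)}{4(1+x)^2}$ and the bounded potential $\alpha V$ supply the remaining terms without altering the form domain $\hone$. As $\utrans$ intertwines the two forms and maps $\qdom{h_\alpha}$ onto $\hone$, the associated self-adjoint operators are unitarily equivalent, i.e.\ $\utrans H_\alpha \utrans^{-1} = H_0 - \alpha V$. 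I expect the main obstacle to be the careful accounting of the boundary term at $x=0$: one must justify the integration by parts on an appropriate core (for instance functions in $\hone$, so that the trace $\varphi(0)$ is well defined) and confirm that this term encodes precisely the Robin boundary condition defining $H_0$, rather than a Dirichlet or Neumann one.
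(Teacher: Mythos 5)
Your proof is correct and takes essentially the same approach as the paper: your unitary is exactly the inverse of the paper's map $(\wtrans\varphi)(x) = (1+x)^{(1-d)/2}\varphi(x)$, and the core computation---the Liouville substitution followed by integration by parts, producing the effective potential $\frac{(d-1)(d-3)}{4(1+x)^2}$ and the Robin boundary term $\frac{d-1}{2}|\varphi(0)|^2$---is the same. The only difference is one of bookkeeping: the paper evaluates the sesquilinear form $h_\alpha$ on functions transported from the operator domain $\dom{H_0}$ and reads off the operator, whereas you match the two closed forms on their form domains, a slightly more explicit rendering of the same argument.
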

\begin{proof}
  Introduce the isometric isomorphism $\wtrans$ from
  $\lspace 2 {0,\infty}$ onto $\hilbert$ defined by
  \begin{displaymath}
    (\wtrans\varphi)(x) = \varphi(x)(1+x)^{(1-d)/2}, \quad
    \varphi \in \lspace 2 {0,\infty}.
  \end{displaymath}
  Recall that $h_\alpha$ is the closed quadratic form corresponding to the
  operator $H_\alpha$.
  Choose $\varphi,\psi \in \dom{H_0}$ and let $u = \wtrans\varphi$ and $v =
  \wtrans\psi$. Clearly $u,v \in \qdom{h_\alpha}$
  and partial integration gives us
  \begin{displaymath}
    h_\alpha[u,v] = -\int_0^\infty \!\!\!\! \varphi''(x)\overline{\psi(x)}\, dx
    + \int_0^\infty \!\!
    \left(\frac{(d-1)(d-3)}{4(1+x)^2} - \alpha V(x)\right)
      \varphi(x)\overline{\psi(x)} \, dx.
  \end{displaymath}
  Therefore the operator $\wtrans(H_0 - \alpha V)\wtrans^{-1}$
  is associated to the quadratic form $h_\alpha$, which proves the statement.
\end{proof}

\begin{lem} \label{lem:fbtransform}
  The transformation $\utrans : \lspace 2 {0,\infty} \to \lspace 2 {0,\infty}$
  given by
  \begin{displaymath}
    (\utrans\varphi)(p) = \int_0^\infty
    \varphi(x)\sqrt{p(1+x)}f_d(p,x)\,dx,
    \quad \supp \varphi \Subset (0,\infty),
  \end{displaymath}
  where
  \begin{displaymath}
    f_d(p,x) = \frac{
      J_{-\frac d 2}(p)Y_{\frac{2-d}{2}}(p(1+x))
      -
      Y_{-\frac d 2}(p)J_{\frac{2-d}{2}}(p(1+x))
    }{
      \left(
      \left(J_{-\frac d 2}(p)\right)^2
      +
      \left(Y_{-\frac d 2}(p)\right)^2
      \right)^{1/2}
    }
  \end{displaymath}
  is a unitary isomorphism under which $H_0$ is equivalent
  to multiplication by the function $p \mapsto p^2$. The inverse of
  $\utrans$ is given by
  \begin{displaymath}
    (\utrans^{-1}\psi)(x) = \int_0^\infty \psi(p)\sqrt{p(1+x)}f_d(p,x)\,dp,
    \quad \supp \psi \Subset (0,\infty).
  \end{displaymath}
\end{lem}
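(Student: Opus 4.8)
The plan is to derive the transform from the Weyl--Titchmarsch--Kodaira theorem (Proposition~\ref{prop:wtk}), applied to $T=H_0$ with $\tau=-\frac{d^2}{dx^2}+\frac{(d-1)(d-3)}{4(1+x)^2}$ and $n=2$. The hypothesis $T\ge 0$ of that proposition holds because, by Lemma~\ref{lem:equivops} with $\alpha=0$, $H_0$ is unitarily equivalent to the operator whose manifestly non-negative form is $\int_0^\infty|u'(x)|^2(1+x)^{d-1}\,dx$. To produce the required solutions of $\tau\sigma=\lambda\sigma$, I would set $s=1+x$ and $\lambda=p^2$ and substitute $\sigma=\sqrt{s}\,Z(ps)$; a direct computation shows that $Z$ then solves Bessel's equation of order $\nu=\frac{2-d}{2}$, since $\nu^2-\tfrac14=\frac{(d-1)(d-3)}{4}$. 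Thus $\sqrt{1+x}\,J_\nu(p(1+x))$ and $\sqrt{1+x}\,Y_\nu(p(1+x))$ form a basis of solutions.

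Next I would impose the boundary condition $\varphi'(0)=\frac{d-1}{2}\varphi(0)$. Writing $\sigma=\sqrt{s}\,Z_\nu(ps)$ and using the recurrence $C_\nu'(z)=C_{\nu-1}(z)-\frac{\nu}{z}C_\nu(z)$ from Proposition~\ref{prop:besselprop}(ii), the boundary condition reduces to $Z_{-d/2}(p)=0$; hence the solution satisfying it is, up to a scalar, $\sigma_1(x,\lambda)=\sqrt{1+x}\bigl(J_{-d/2}(p)Y_\nu(p(1+x))-Y_{-d/2}(p)J_\nu(p(1+x))\bigr)$, i.e.\ $\sqrt{1+x}\sqrt{J_{-d/2}(p)^2+Y_{-d/2}(p)^2}\,f_d(p,x)$. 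For $\impart\lambda>0$ I would take $\impart p>0$; then by the asymptotics of Proposition~\ref{prop:besselprop}(vi) the function $\psi(x)=\sqrt{1+x}\,H^{(1)}_\nu(p(1+x))$ decays exponentially and is the (unique up to scalar) square-integrable solution at infinity. The resolvent $(H_0-\lambda)^{-1}$ then has kernel $K_\lambda(x,y)=\sigma_1(x_<)\psi(x_>)/W$; expanding $\psi$ in the basis, computing $W$ from Proposition~\ref{prop:besselprop}(viii) together with the identity $W_x(\sqrt{s}\,C^1(ps),\sqrt{s}\,C^2(ps))=ps\,\mathcal W\{C^1,C^2\}(ps)$, and reading off the coefficient of $\sigma_1(x)\overline{\sigma_1(y,\bar\lambda)}$ for $y<x$, I expect the cancellation $B-iA=-i(A+iB)$ (with $A=J_{-d/2}(p),\ B=Y_{-d/2}(p)$) to yield $\theta(\lambda)=\dfrac{-i\pi}{2\,(J_{-d/2}(p)^2+Y_{-d/2}(p)^2)}$.

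From here the spectral measure follows: since $\tau$ has real coefficients, $\theta(\bar\lambda)=\overline{\theta(\lambda)}$, so the jump across $(0,\infty)$ equals $2i\,\impart\theta(x+i0)$, and Proposition~\ref{prop:wtk} gives $d\rho=\frac{1}{2(J_{-d/2}(\sqrt\lambda)^2+Y_{-d/2}(\sqrt\lambda)^2)}\,d\lambda$, a positive measure as it must be. Changing variables $\lambda=p^2$ turns this into $d\rho=\frac{p}{J_{-d/2}(p)^2+Y_{-d/2}(p)^2}\,dp$, and composing $\vtrans$ with the unitary map $L^2((0,\infty),\rho)\to\ltwo$ given by $g\mapsto \sqrt{p/(J_{-d/2}(p)^2+Y_{-d/2}(p)^2)}\,g(p^2)$ produces $\utrans$. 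The square root supplies precisely the normalizing denominator of $f_d$, so the kernel of $\utrans$ is $\sqrt{p(1+x)}\,f_d(p,x)$ (the reality of $\sigma_1$ for real $p$, Proposition~\ref{prop:besselprop}(vii), removes the conjugation), and $H_0$ is carried to multiplication by $p^2$. Since $\utrans$ is unitary with real kernel, $\utrans^{-1}=\utrans^\ast$ has the transposed kernel, which is exactly the stated inversion formula.

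The main obstacle I anticipate is meeting the analyticity hypothesis of Proposition~\ref{prop:wtk}: the Bessel combinations above depend on $\lambda$ through $p=\sqrt\lambda$ and are not a priori single-valued, so one must work with a basis that is genuinely analytic in $\lambda$---most safely the solutions determined by fixed Cauchy data at the regular endpoint $x=0$, which are entire in $\lambda$---and then verify that the $\lambda$-dependent renormalization relating them to the Bessel expressions cancels between $\sigma_1$ and the density $d\rho/d\lambda$ in the final kernel. The second delicate point is the bookkeeping of branches and signs (the choice $\impart p>0$, the orientation of the Green's function, and the conjugation relation for $\theta$) that forces $\rho$ to come out positive; properties (vi), (vii) and (viii) of Proposition~\ref{prop:besselprop} are precisely the ingredients that make these cancellations explicit.
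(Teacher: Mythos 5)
Your proposal follows essentially the same route as the paper's proof: the same Bessel/Hankel basis $\sigma_1$, $\sigma_2$, $\chi$, the same Green's-function computation feeding into Proposition~\ref{prop:wtk}, the spectral measure $d\rho(\lambda)=\frac{d\lambda}{2\left(J_{-d/2}(\sqrt{\lambda})^2+Y_{-d/2}(\sqrt{\lambda})^2\right)}$, and the final composition with the substitution $\lambda=p^2$ and the normalizing factor, which is exactly the paper's factorization $\utrans=\wtrans\vtrans$; your derivation of the basis, of the boundary condition $Z_{-d/2}(p)=0$, and of the inverse via the real transposed kernel are all sound.

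One slip should be corrected: with $A=J_{-d/2}(\lambda^{1/2})$, $B=Y_{-d/2}(\lambda^{1/2})$, the cancellation $B-iA=-i(A+iB)$ gives the Wronskian $W=\tfrac{2}{\pi}(A+iB)$ and hence, since the $\sigma_1$-component of $\chi$ is $\tfrac{i}{A}$, the coefficient $\theta(\lambda)=\frac{i\pi}{2A(A+iB)}$ in the upper half-plane (this is what the paper's kernel formula displays), not $\frac{-i\pi}{2(A^2+B^2)}$. Your value is inconsistent with your own jump formula: it would give $\theta(x+i0)-\theta(x-i0)=\frac{-i\pi}{A^2+B^2}$ and thus the negative measure $-\frac{d\lambda}{2(A^2+B^2)}$, whereas the correct $\theta$ has jump $\frac{i\pi}{A^2+B^2}$, producing the positive measure you (correctly) state; with that repair the rest of your argument goes through unchanged.
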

\begin{proof}
  Throughout, the complex number $\lambda$ will be chosen from a
  sufficiently small, fixed, open neighborhood in $\complex$
  of $(0,\infty)$. Let
  \begin{displaymath}
    \sigma_1(x,\lambda)
    = \sqrt{1 \!+\! x}\left(
      J_{-\frac d 2}(\lambda^{1/2})Y_{\frac {2-d}{2}}(\lambda^{1/2}(1\!+\!x))
      -
      Y_{-\frac d 2}(\lambda^{1/2})J_{\frac {2-d}{2}}(\lambda^{1/2}(1\!+\!x))
    \right)
  \end{displaymath}
  and
  \begin{displaymath}
    \sigma_2(x,\lambda) = \sqrt{1+x}J_{\frac{2-d}{2}}(\lambda^{1/2}(1+x))
  \end{displaymath}
  and
  \begin{displaymath}
    \chi(x,\lambda) = \left\{
      \begin{array}{ll}
        \sqrt{1+x}\, H^{(1)}_{\frac{2-d}{2}}(\lambda^{1/2}(1+x))
        & \textrm{ if } \impart \lambda > 0 \\
         & \\
        \sqrt{1+x}\, H^{(2)}_{\frac{2-d}{2}}(\lambda^{1/2}(1+x))
        & \textrm{ if } \impart \lambda < 0. \\
      \end{array}
    \right.
  \end{displaymath}
  Using Proposition \ref{prop:besselprop}, it is seen that the functions
  $\sigma_1(\cdot,\lambda)$, $\sigma_2(\cdot,\lambda)$ and
  $\chi(\cdot,\lambda)$ all satisfy
  the equation
  \begin{displaymath}
    -\varphi''(x)+\frac{(d-1)(d-3)}{4(1+x)^2}\varphi(x) = \lambda \varphi(x).
  \end{displaymath}
  The same Proposition also shows that
  \begin{displaymath}
    \sigma_1'(0,\lambda) = \frac{d-1}{2}\sigma_1(0,\lambda)
    \quad \textrm{ and } \quad
    \lim_{R \to \infty} \int_R^\infty|\chi(x,\lambda)|^2 \, dx = 0
  \end{displaymath}
  whenever $\impart \lambda \neq 0$. Hence, if we let
  \begin{displaymath}
    K_\lambda(x,y) = \frac{1}
    {\sigma_1'(0,\lambda)\chi(0,\lambda)-\sigma_1(0,\lambda)\chi'(0,\lambda)}
    \cdot
    \left\{
      \begin{array}{ll}
        \chi(x,\lambda)\sigma_1(y,\lambda) & \textrm{ if } y < x \\
        \sigma_1(x,\lambda)\chi(y,\lambda) & \textrm{ if } y > x, \\
      \end{array}
    \right.
  \end{displaymath}
  it follows that the resolvent $(H_0 - \lambda)^{-1}$ is an integral
  operator with kernel $K_\lambda$ for $\lambda$ with $\impart \lambda
  \neq 0$. Some calculations give us that if $y < x$ and
  $\impart \lambda > 0$ then
  \begin{displaymath}
    K_\lambda(x,y)
    = \frac{\pi}{2J_{-\frac d 2}(\lambda^{1/2})}\left(
      \frac{i \sigma_1(x,\lambda)\sigma_1(y,\lambda)}
      {J_{-\frac d 2}(\lambda^{1/2}) + iY_{-\frac d 2}(\lambda^{1/2})}
      + \sigma_2(x,\lambda)\sigma_1(y,\lambda)
    \right)
  \end{displaymath}
  and if $y < x$ and $\impart \lambda > 0$ then
  \begin{displaymath}
    K_\lambda(x,y)
    = \frac{\pi}{2J_{-\frac d 2}(\lambda^{1/2})}\left(
      \frac{-i \sigma_1(x,\lambda)\sigma_1(y,\lambda)}
      {J_{-\frac d 2}(\lambda^{1/2}) - iY_{-\frac d 2}(\lambda^{1/2})}
      + \sigma_2(x,\lambda)\sigma_1(y,\lambda)
    \right).
  \end{displaymath}
  Proposition \ref{prop:besselprop} provides that
  $\overline{\sigma_1(\cdot,\lambda)} =
  \sigma_1(\cdot,\overline \lambda)$ and that $\sigma_1$ and $\sigma_2$
  are linearly independent.
  Hence, applying Proposition \ref{prop:wtk}, we get
  that the operator $\vtrans$ defined by
  \begin{displaymath}
    (\vtrans \varphi)(y) = \int_0^\infty \varphi(x)\sigma_1(x,y) \, dx,
    \quad \supp \varphi \Subset (0,\infty)
  \end{displaymath}
  is an isometric isomorphism from $\lspace 2 {0,\infty}$ onto
  $\lspace 2 {(0,\infty), \rho}$ such that the operator $H_0$ is
  equivalent to multiplication by the identity function under this
  isomorphism. Here, the measure $\rho$ is given by
  \begin{displaymath}
    d\rho(y) = \frac{dy}{2\left(
        \left(J_{-\frac d 2}(\sqrt y)\right)^2
        + \left(Y_{-\frac d 2}(\sqrt y)\right)^2
      \right)}.
  \end{displaymath}
  Furthermore, the inverse of $\vtrans$ is given by
  \begin{displaymath}
    (\vtrans^{-1} f)(x) = \int_0^\infty
    f(y)\sigma_1(x,y) \, d\rho(y),
    \quad \supp f \Subset (0,\infty).
  \end{displaymath}
  It remains to set $\utrans = \wtrans\vtrans$, where the isometry
  $\wtrans$ from $\lspace 2 {(0,\infty),\rho}$ onto
  $\lspace 2 {0,\infty}$ is given by
  \begin{displaymath}
    (\wtrans f)(p) = f(p^2)\cdot\frac{\sqrt{p}}{\left(
        \left(J_{-\frac d 2}(p)\right)^2
        + \left(Y_{-\frac d 2}(p)\right)^2
      \right)^{1/2}},
    \quad
    f \in \lspace 2 {(0,\infty),\rho}. \qedhere
  \end{displaymath}
\end{proof}

%%%%%%%%%%%%%%%%%%%%%%%%%%%%%%%%%%%%%%%%%%%%%%%%%%%%%%%%%%%%%%%%%%%%%%%%%%%%%%%%
\section{Appendix: Integral Kernel of the Birman-Schwinger Operator}
%%%%%%%%%%%%%%%%%%%%%%%%%%%%%%%%%%%%%%%%%%%%%%%%%%%%%%%%%%%%%%%%%%%%%%%%%%%%%%%%
\label{sec:birmanschwinger}
In this appendix, we will use the operator $H_0$ as defined in
Section \ref{sec:auxop}. And apply the Birman-Schwinger principle,
\cite{birman,schwinger}, to prove Lemma \ref{lem:trineq}.
Moreover, $d$ and $\gamma$ will be fixed
numbers satisfying $1 < \gamma \leq d \leq 2$ and $\gamma \neq 2$,
and the measurable potential function $V$ will satisfy
\begin{equation} \label{eq:Vcond}
  0 \leq V(x) \leq \frac{C}{(1+x)^\gamma},
\end{equation}
for some $C > 0$. Let $\Omega = (0,\infty) \times (0,\infty)$
and consider the function $l_E$ on $\Omega$ given by
\begin{displaymath}
  l_E(p,x) =
  f_d(p,x)
  \left(
    \frac{
      p(1+x)
      V(x)
    } {
      p^2 + E
    }
  \right)^{1/2},
\end{displaymath}
for $E > 0$. Here, as in Lemma \ref{lem:fbtransform}, we have that
\begin{displaymath}
  f_d(p,x) = \frac{
    J_{-\frac d 2}(p)Y_{\frac{2-d}{2}}(p(1+x))
    -
    Y_{-\frac d 2}(p)J_{\frac{2-d}{2}}(p(1+x))
  }{
    \left(
      \left(J_{-\frac d 2}(p)\right)^2
      +
      \left(Y_{-\frac d 2}(p)\right)^2
    \right)^{1/2}
  }.
\end{displaymath}
Also define the bounded integral operator $L_E$ by
\begin{displaymath}
  \left(L_E\psi\right)(p) = \int_0^\infty l_E(p,x)\psi(x)\,dx,
  \quad
  \psi \in \lspace 2 {0,\infty}.
\end{displaymath}
That $L_E$ is well-defined and even a Hilbert-Schmidt operator is
provided by the following lemma:

\begin{lem} \label{lem:l-est}
  Suppose that $1 < \gamma \leq d \leq 2$, $\gamma \neq 2$ and
  that $V$ satisfies \eqref{eq:Vcond}.
  Then the following holds:
  \begin{enumerate}[{\normalfont (i)}]
  \item If $1 < \gamma < d \leq 2$ then there is a constant $B_1 > 0$
    such that for any $E > 0$,
    \begin{displaymath}
      \iint_\Omega \left| l_E(p,x) \right|^2 \, dx \, dp
      < B_1 E^{\frac{\gamma-2}{2}}.
    \end{displaymath}
  \item If $1 < \gamma = d < 2$ then there is a constant $B_2 > 0$
    such that for any $E > 0$,
    \begin{displaymath}
      \iint_\Omega \left| l_E(p,x) \right|^2 \, dx \, dp
      < B_2 E^{\frac{\gamma-2}{2}} \left( 1 + \left|\log E\right| \right).
    \end{displaymath}
  \end{enumerate}
\end{lem}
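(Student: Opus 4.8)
The plan is to bound the Hilbert--Schmidt norm $\iint_\Omega |l_E|^2$ directly. Writing out $|l_E(p,x)|^2$ and using the hypothesis $V(x)\leq C(1+x)^{-\gamma}$, we get
\begin{displaymath}
  \iint_\Omega |l_E(p,x)|^2\,dx\,dp
  \leq C\int_0^\infty \frac{p}{p^2+E}\, I(p)\,dp,
  \qquad
  I(p) \coloneq \int_0^\infty |f_d(p,x)|^2 (1+x)^{1-\gamma}\,dx.
\end{displaymath}
First I would treat the inner integral $I(p)$. Writing $f_d = N/D$ with numerator $N(p,x) = J_{-d/2}(p)Y_{(2-d)/2}(p(1+x)) - Y_{-d/2}(p)J_{(2-d)/2}(p(1+x))$ and $D(p)^2 = J_{-d/2}(p)^2 + Y_{-d/2}(p)^2$, the substitution $s = p(1+x)$ yields
\begin{displaymath}
  I(p) = \frac{p^{\gamma-2}}{D(p)^2}\int_p^\infty
  \bigl(J_{-d/2}(p)Y_{(2-d)/2}(s) - Y_{-d/2}(p)J_{(2-d)/2}(s)\bigr)^2 s^{1-\gamma}\,ds.
\end{displaymath}

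The heart of the argument is a uniform-in-$p$ bound on $I(p)$. Since $(A-B)^2 \leq 2A^2 + 2B^2$, the integrand is at most $2J_{-d/2}(p)^2 Y_{(2-d)/2}(s)^2 + 2Y_{-d/2}(p)^2 J_{(2-d)/2}(s)^2$, and the two $p$-dependent factors pull out of the $s$-integral and sum to $D(p)^2$. Thus everything reduces to the two scalar integrals $\int_p^\infty J_{(2-d)/2}(s)^2 s^{1-\gamma}\,ds$ and $\int_p^\infty Y_{(2-d)/2}(s)^2 s^{1-\gamma}\,ds$, which I would estimate using Proposition \ref{prop:besselprop}: near $s=0$ one has $J_{(2-d)/2}(s)^2 = O(s^{2-d})$ and $Y_{(2-d)/2}(s)^2 = O(s^{d-2})$ (respectively $O((\log s)^2)$ when $d=2$), while at $s=\infty$ both are $O(s^{-1})$. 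A power count shows the $J$-integral is bounded uniformly in $p$, while the $Y$-integral is uniformly bounded when $\gamma < d$ and of order $1 + |\log p|$ when $\gamma = d$. Crucially the factor $D(p)^2$ now cancels, leaving
\begin{displaymath}
  I(p) \leq
  \begin{cases}
    C\, p^{\gamma-2}, & \gamma < d,\\
    C\, p^{\gamma-2}(1+|\log p|), & \gamma = d,
  \end{cases}
\end{displaymath}
for every $p>0$ --- so I never need the precise size of $D(p)$, only that it is positive, which holds by the linear independence in Proposition \ref{prop:besselprop}(i).

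Finally I would insert this into the outer integral and rescale $p = \sqrt{E}\,q$. For $\gamma<d$ this turns $\int_0^\infty p^{\gamma-1}(p^2+E)^{-1}\,dp$ into $E^{(\gamma-2)/2}\int_0^\infty q^{\gamma-1}(q^2+1)^{-1}\,dq$, and the last integral converges precisely because $1<\gamma<2$, giving the bound $B_1 E^{(\gamma-2)/2}$. For $\gamma=d$ the extra factor becomes $1+|\tfrac12\log E + \log q|$; splitting it by the triangle inequality isolates a term proportional to $|\log E|$ times a convergent integral, plus the remaining convergent integral $\int_0^\infty q^{\gamma-1}|\log q|(q^2+1)^{-1}\,dq$, producing $B_2 E^{(\gamma-2)/2}(1+|\log E|)$. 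One then checks that a single constant works for all $E>0$ and not merely as $E\to0+$: the rescaled integrals are bounded independently of $E$, while the large-$p$ contribution, where $I(p)=O(p^{-1})$, is $O(E^{-1/2})$, which is dominated by $E^{(\gamma-2)/2}$ since $\gamma>1$. A minor enlargement of the constant turns the resulting $\leq$ into the strict inequality in the statement.

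I expect the main obstacle to be the uniform-in-$p$ control of the two Bessel integrals: matching the small-argument and large-argument asymptotics across the transition region, and tracking the logarithmic divergence exactly in the borderline case $\gamma=d$ (as well as, within case (i), the sub-case $d=2$, where $Y_0$ grows logarithmically rather than as a negative power near the origin). Everything else is a power count together with the clean cancellation of $D(p)^2$.
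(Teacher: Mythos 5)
Your proposal is correct and is essentially the paper's own proof reorganized: your two substitutions $s = p(1+x)$ and $p = \sqrt{E}\,q$ together constitute exactly the paper's single change of variables, your use of $(A-B)^2 \leq 2A^2 + 2B^2$ to cancel the normalizing denominator $D(p)^2$ is precisely the paper's key bound on the function $F_d$, and the subsequent Bessel power counts (including the logarithmic subcase $d=2$ within case (i) and the $\log$-tracking in the borderline case $\gamma = d$) coincide with the paper's. The only blemish is your superfluous closing remark about the ``large-$p$ contribution'': as stated, the claim that a quantity of size $O(E^{-1/2})$ is dominated by $E^{(\gamma-2)/2}$ fails as $E \to 0+$ (the contribution in question is in fact also $O(1)$, which does suffice), but no such separate check is needed at all, since your rescaled integrals are independent of $E$ (or carry only the explicit $|\log E|$ factor), so your main estimate already holds verbatim for every $E > 0$.
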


\begin{proof}
  Making a change of variables twice, we get that
  \begin{equation} \label{eq:iintl}
    \begin{split}
      \iint_\Omega \left| l_E(p,x) \right|^2 \, dx \, dp
      & \leq C \iint_\Omega \frac{p}{(p^2+E)(1+x)^{\gamma-1}}
      \left(f_d(p,x)\right)^2 \,dx \,dp \\
      & = C E^{\frac{\gamma-2}{2}}
      \int_0^\infty \!\!\!\! \int_{pE^{1/2}}^\infty
      \frac{p^{\gamma-1}}{(p^2+1)x^{\gamma-1}}F_d(p,x) \, dx \, dp,
    \end{split}
  \end{equation}
  where
  \begin{displaymath}
    \begin{split}
      F_d(p,x)
      & = \left(f_d\left(pE^{1/2},\frac{x}{pE^{1/2}}-1\right)\right)^2 \\
      & = \frac{\left(J_{-\frac d 2}\left(pE^{1/2}\right)
          Y_{\frac{d-2}{2}}\left(x\right)
          - Y_{-\frac d 2}\left(pE^{1/2}\right)
          J_{\frac{d-2}{2}}\left(x\right) \right)^2}
      {\left(J_{-\frac d 2}\left(pE^{1/2}\right)\right)^2
        + \left(Y_{-\frac d 2}\left(pE^{1/2}\right)\right)^2}.
    \end{split}
  \end{displaymath}
  Recall that
  \begin{displaymath}
    J_\nu(x) \sim \sqrt{\frac{2}{\pi x}}
    \cos\left(x \!-\! \frac {\nu \pi}{2} \!-\! \frac{\pi}{4}\right)
    \quad \textrm{ and } \quad
    Y_\nu(x) \sim \sqrt{\frac{2}{\pi x}}
    \sin\left(x \!-\! \frac {\nu \pi}{2} \!-\! \frac{\pi}{4}\right).
  \end{displaymath}
  as $x \to \infty$ for any $\nu$ and therefore
  there is a $c_1 > 0$, such that for any $x \geq 1/2$ and any $d$,
  \begin{equation} \label{eq:YJlargex}
    2\left(Y_{\frac{2-d}{2}}(x)\right)^2 \leq \frac{c_1}{x}
    \quad \textrm{ and } \quad
    2\left(J_{\frac{2-d}{2}}(x)\right)^2 \leq \frac{c_1}{x}.
  \end{equation}

  Now assume that $1 < d < 2$. We have that
  $Y_\nu(x) \sim -(1/\pi)\Gamma(\nu)(x/2)^{-\nu}$
  and that $J_\nu(x)$ is bounded as $x \to 0+$, for any $\nu > 0$.
  Hence there is a constant $c_2 > 0$
  such that for any $x$ with $0 < x < 1$
  and any $d$ with $1 < d < 2$,
  \begin{displaymath}
    2\left(Y_{\frac{2-d}{2}}(x)\right)^2 \leq \frac{c_1}{x^{2-d}}
    \quad \textrm{ and } \quad
    2\left(J_{\frac{2-d}{2}}(x)\right)^2 \leq \frac{c_1}{x^{2-d}}.
  \end{displaymath}
  Combine this with \eqref{eq:YJlargex} and obtain that as soon as
  $1 < d < 2$,
  \begin{equation} \label{eq:Fdcase1}
    \begin{split}
      F_d(p,x)
      & \leq
      \frac{
        2\left(J_{-\frac{d}{2}}\left(pE^{\frac{1}{2}}\right)\right)^2
        \!\!
        \left(Y_{\frac{2-d}{2}}\left(x\right)\right)^2
        \!\! +
        2\left(Y_{-\frac{d}{2}}\left(pE^{\frac{1}{2}}\right)\right)^2
        \!\!
        \left(J_{\frac{2-d}{2}}\left(x\right)\right)^2
      }
      {
        \left(J_{-\frac{d}{2}}\left(pE^{\frac{1}{2}}\right)\right)^2
        \!\!\ +
        \left(Y_{-\frac{d}{2}}\left(pE^{\frac{1}{2}}\right)\right)^2
      }
      \\
      & \leq \frac{c_2}{x^{2-d}}\chi_{(0,1)}(x)
      + \frac{c_1}{x}\chi_{[1,\infty)}(x),
    \end{split}
  \end{equation}
  for any $x,p > 0$. If $1 < \gamma < d < 2$, we get from
  \eqref{eq:iintl} and \eqref{eq:Fdcase1} that
  \begin{displaymath}
    \iint_\Omega \left|l_E(p,x)\right|^2 \, dx \, dp
    \leq
    C E^{\frac{\gamma-2}{2}}
    \int_0^\infty\!\!\!\!
    \int_{0}^\infty
    \frac{p^{\gamma-1}}{(p^2 + 1)x^{\gamma-1}}
    F_d(p,x)\,dx\,dp
    \leq
    D_1E^{\frac{\gamma-2}{2}},
  \end{displaymath}
  where
  \begin{displaymath}
    D_1 = D_1(\gamma,d) =
    C\left(
      c_2\!\!\int_0^\infty\!\!\!\!\int_0^1
      \frac{p^{\gamma-1}\,dx\,dp}{(p^2+1)x^{1-(d-\gamma)}}
      +
      c_1\!\!\int_0^\infty\!\!\!\!\int_1^\infty
      \frac{p^{\gamma-1}\,dx\,dp}{(p^2+1)x^\gamma}
    \right)
    < \infty.
  \end{displaymath}
  Moreover, if $1 < \gamma = d < 2$, then
  \eqref{eq:iintl} and \eqref{eq:Fdcase1} give that
  \begin{displaymath}
    \begin{split}
      \frac{
        \iint_\Omega \!\! \left|l_E(p,x)\right|^2 \! dx \, dp
      }{
        CE^{\frac{\gamma-2}{2}}
      }
      & \leq
      \int_0^\infty\frac{p^{\gamma-1}}{p^2+1}
      \left(
        c_2\!\!\int_{pE^{1/2}}^1 \!\!
        \frac{dx}{x}
        +
        c_1\!\!\int_1^\infty
        \frac{dx}{x^\gamma}
      \right)
      \, dp \\
      & =
      \frac{c_1}{\gamma \! - \! 1}\int_0^\infty \!\!\!
      \frac{p^{\gamma - 1}}{p^2+1}
      \, dp
      -
      c_2\int_0^{E^{-1/2}}
      \!\!\!\!\!\! \frac{p^{\gamma-1}}{p^2+1}
      \left(\log p \!+\! \frac{1}{2}\log E\right)
      \, dp,
    \end{split}
  \end{displaymath}
  which means that
  \begin{displaymath}
    \iint_\Omega \left|l_E(p,x)\right|^2 \, dx \, dp
    \leq D_2 E^{\frac{\gamma-2}{2}}
    + D_3 E^{\frac{\gamma-2}{2}}\left|\log E\right|,
  \end{displaymath}
  where
  \begin{displaymath}
    D_2 = D_2(\gamma)
    = C \left( \frac{c_1}{\gamma \! - \! 1} \int_0^\infty
      \frac{p^{\gamma-1}}{p^2+1} \, dp
      + c_2 \int_0^\infty \frac{p^{\gamma - 1}|\log p|}{p^2+1} \, dp
    \right)
    < \infty
  \end{displaymath}
  and
  \begin{displaymath}
    D_3 = D_3(\gamma)
    = C \frac{c_2}{2} \int_0^\infty \frac{p^{\gamma - 1}}{p^2 + 1} \, dp
    < \infty.
  \end{displaymath}

  Finally, assume that $d = 2$ and that $1 < \gamma < d$. Since
  $Y_0(x) \sim -2/\pi \cdot \log x$ and $J_0(x)$
  is bounded as $x \to 0+$, there is a constant $c_3$ such that
  \begin{displaymath}
    2\left(Y_0(x)\right)^2 \leq c_3 |\log x|
    \quad \textrm{ and } \quad
    2\left(J_0(x)\right)^2 \leq c_3 |\log x|
  \end{displaymath}
  for any $x$ with $0 < x < 1/2$. Together with \eqref{eq:YJlargex}
  this gives
  \begin{equation} \label{eq:Fdcase2}
    F_2(p,x) \leq c_3|\log x|\chi_{(0,1/2)}(x)
    + \frac{c_1}{x}\chi_{[1/2,\infty)}(x),
  \end{equation}
  for any $x,p > 0$. If $1 < \gamma < d = 2$, equations
  \eqref{eq:iintl} and \eqref{eq:Fdcase2} shows that
  \begin{displaymath}
    \iint_\Omega \left|l_E(p,x)\right|^2 \, dx \, dp
    \leq C E^{\frac{\gamma-2}{2}}  \!\!
    \int_0^\infty \!\!\!\! \int_0^\infty \!\!\!\!
    \frac{p^{\gamma-1}}{(p^2 + 1)x^{\gamma-1}}F_2(p,x) \, dx \, dp
    \leq D_4 E^{\frac{\gamma-2}{2}},
  \end{displaymath}
  where
  \begin{displaymath}
    D_4 = D_4(\gamma)
    = C \int_0^\infty
      \frac{p^{\gamma-1}}{p^2+1} \, dp \left(
      c_3 \int_0^{1/2} \!\! \frac{|\log x|}{x^{\gamma-1}} \, dx
      + c_1 \int_{1/2}^\infty \frac{1}{x^\gamma} \, dx
    \right) < \infty. \qedhere
  \end{displaymath}
\end{proof}

\begin{lem} \label{lem:equiv-b-s-ops}
  Let the unitary transformation
  $\utrans : \lspace 2 {0,\infty} \to \lspace 2 {0,\infty}$ be as in
  Lemma \ref{lem:fbtransform} and $E > 0$. Assume that $1 < \gamma
  \leq d \leq 2$, $\gamma \neq 2$ and that $V$ satisfies
  \eqref{eq:Vcond}. Then
  \begin{displaymath}
    \utrans\left(
      (H_0+E)^{-1/2}V(H_0+E)^{-1/2}
    \right)\utrans^{-1}
    =
    L_EL_E^*.
  \end{displaymath}
\end{lem}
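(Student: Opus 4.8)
The plan is to exhibit $L_E$ as a product of three elementary factors, take its adjoint, and then recognize the composition $L_EL_E^*$ using the diagonalization of $H_0$ furnished by Lemma \ref{lem:fbtransform}.

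First I would factor the kernel $l_E$. Since the kernel of $\utrans$ is exactly $\sqrt{p(1+x)}\,f_d(p,x)$, the definition of $l_E$ rearranges as
\begin{displaymath}
  l_E(p,x) = \frac{1}{\sqrt{p^2+E}}\,\sqrt{p(1+x)}\,f_d(p,x)\,\sqrt{V(x)},
\end{displaymath}
so that, denoting by $M_E$ the operator of multiplication by $(p^2+E)^{-1/2}$ and by $V^{1/2}$ the operator of multiplication by $\sqrt{V(x)}$ on $\ltwo$, one reads off the factorization $L_E = M_E\,\utrans\,V^{1/2}$. Here I would record that $f_d(p,x)$ is real for $p,x>0$, being a combination of Bessel functions of real order and real argument, so no conjugation enters the kernel; that $V$ is bounded and non-negative, whence $V^{1/2}$ is a bounded self-adjoint multiplication operator; and that $M_E$ is bounded and self-adjoint because $H_0\geq 0$ and $E>0$. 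The Hilbert–Schmidt estimate of Lemma \ref{lem:l-est} guarantees that every manipulation below takes place between genuine bounded operators.

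Next I would compute the adjoint. As $\utrans$ is unitary, $\utrans^*=\utrans^{-1}$, and since $M_E$ and $V^{1/2}$ are self-adjoint, reversing the order of the factors gives $L_E^* = V^{1/2}\,\utrans^{-1}\,M_E$. Composing the two,
\begin{displaymath}
  L_EL_E^* = M_E\,\utrans\,V^{1/2}V^{1/2}\,\utrans^{-1}\,M_E
           = M_E\,\utrans\,V\,\utrans^{-1}\,M_E,
\end{displaymath}
where $V$ now denotes multiplication by $V(x)$.

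Finally I would reinsert $H_0$. By Lemma \ref{lem:fbtransform} the operator $\utrans H_0\utrans^{-1}$ is multiplication by $p^2$, so the functional calculus yields $M_E = \utrans(H_0+E)^{-1/2}\utrans^{-1}$. Substituting this identity for both outer factors and cancelling the adjacent $\utrans^{-1}\utrans$ produces
\begin{displaymath}
  L_EL_E^* = \utrans\,(H_0+E)^{-1/2}\,V\,(H_0+E)^{-1/2}\,\utrans^{-1},
\end{displaymath}
which is the assertion. The only point demanding genuine care is the bookkeeping of adjoints together with the reality of $f_d$; once the factorization $L_E = M_E\,\utrans\,V^{1/2}$ is in hand, the remainder is a formal and routine computation.
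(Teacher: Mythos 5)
Your proof is correct and follows essentially the same route as the paper: the paper verifies (on a dense set of compactly supported functions) the identity $\utrans V^{1/2}(H_0+E)^{-1/2}\utrans^{-1}=\utrans L_E^*$, which is precisely the adjoint form of your factorization $L_E = M_E\,\utrans\,V^{1/2}$, and then concludes exactly as you do by taking adjoints and using unitarity together with the diagonalization of $H_0$ from Lemma \ref{lem:fbtransform}. The only cosmetic difference is that you read the factorization off the kernel directly, while the paper confirms it by computing the composition on compactly supported $\psi$ and extending by boundedness.
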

\begin{proof}
  Choose $\psi \in \ltwo$ with $\supp \psi \Subset (0,\infty)$. Note
  that
  \begin{displaymath}
    \begin{split}
      \utrans V^{1/2}(H_0 + E)^{-1/2} \utrans^{-1} \psi
      & = \utrans V^{1/2} \utrans^{-1} \utrans (H_0 + E)^{-1/2}
      \utrans^{-1} \psi \\
      & = \utrans V^{1/2} \utrans^{-1} \left(
        \frac{\psi(\cdot)}{(\cdot^2 + E)^{1/2}}
      \right) \\
      & = \utrans V^{1/2} \left(
        \int_0^\infty \frac{\psi(p)}{(p^2 + E)^{1/2}}
        \sqrt{p(1+\cdot)} f_d(p,\cdot) \, dp
      \right) \\
      & = \utrans L_E^* \psi
    \end{split}
  \end{displaymath}
  Since the operators in question are bounded, this is enough to prove
  that
  \begin{displaymath}
    \utrans V^{1/2}(H_0 + E)^{-1/2} \utrans^{-1} = \utrans L_E^*.
  \end{displaymath}
  It follows that
  \begin{displaymath}
    \begin{split}
      \utrans (H_0 + E)^{-1/2}&V(H_0 + E)^{-1/2} \utrans^{-1} \\
      & = \left(
        \utrans V^{1/2}(H_0 + E)^{-1/2} \utrans^{-1}
      \right)^*
      \left(
        \utrans V^{1/2}(H_0 + E)^{-1/2} \utrans^{-1}
      \right) \\
      & = \left(\utrans L_E^*\right)^* \utrans L_E^* \\
      & = L_E L_E^*.
      \qedhere
    \end{split}
  \end{displaymath}
\end{proof}

We are now finally in a position to prove Lemma \ref{lem:trineq}.

\begin{proof}[Proof of Lemma \ref{lem:trineq}]
  It is well-known that if $l \in \lspace 2 \Omega$ and $L$ is the
  Hilbert-Schmidt operator defined by $(L\psi)(p) = \int_0^\infty
  l(p,x)\psi(x)\,dx$, then the operator $Q = LL^*$ is
  non-negative, trace class and satisfies
  \begin{displaymath}
    \tr Q \leq \int_\Omega |l(p,x)|^2 \, dx \, dp.
  \end{displaymath}
  It remains to choose $E > 0$, set
  \begin{displaymath}
    Q_E = \left(H_0 + E\right)^{-1/2}V\left(H_0 + E\right)^{-1/2},
  \end{displaymath}
  and use Lemmas \ref{lem:equivops}, \ref{lem:l-est} and
  \ref{lem:equiv-b-s-ops}.
\end{proof}

\textbf{Acknowledgements}. Hynek Kova\v r\'{\i}k was supported by
the German Research Foundation (DFG) under Grant KO 3636/1-1. Tomas
Ekholm and Andreas Enblom were partially supported by the ESF
programme SPECT.

%%%%%%%%%%%%%%%%%%%%%%%%%%%%%%%%%%%%%%%%%%%%%%%%%%%%%%%%%%%%%%%%%%%%%%%%%%%%%%%%
% Bibliography
%%%%%%%%%%%%%%%%%%%%%%%%%%%%%%%%%%%%%%%%%%%%%%%%%%%%%%%%%%%%%%%%%%%%%%%%%%%%%%%%
\bibliography{weak}
\bibliographystyle{plain}  %%% Use with document class article

\end{document}